\newcommand{\thetitle}{Coordination of multi-agent systems via asynchronous cloud communication}
    \newcommand{\algoFull}{\texttt{\thetitle}\xspace}
\newcommand{\tlast}{t^{\textrm{last}}}
\newcommand{\tnext}{t^{\textrm{next}}}
\newcommand{\texp}{t^{\textrm{expire}}}
\newcommand{\tdwell}{T^{\textrm{dwell}}}
\newcommand{\tideal}{t^{\textrm{ideal}}}
\newcommand{\tmax}{t_\textrm{max}}
\newcommand{\plast}{p^\textrm{last}}
\newtheorem{theorem}{Theorem}[section]
\newtheorem{remark}[theorem]{Remark}
\newtheorem{proposition}{Proposition}
\newtheorem{definition}{Definition}
\newtheorem{problem}{Problem}
\newcommand{\real}{{\mathbb{R}}}
\newcommand{\integernonnegative}{\mathbb{Z}_{\ge 0}}
\newcommand{\GG}{{\mathcal{G}}}
\newcommand{\NN}{{\mathcal{N}}}
\renewcommand{\epsilon}{\varepsilon}
\newcommand{\argmax}{\operatorname{argmax}}
\newcommand{\until}[1]{\{1,\dots, #1\}}
\newcommand{\TwoNorm}[1]{\|#1\|_2}
\renewcommand{\TwoNorm}[1]{\|#1\|}
\newcommand{\commgraph}{\GG_\text{comm}}
\renewcommand{\commgraph}{\GG}
\newcommand{\timelast}{t^\text{last}}
\newcommand{\timenext}{t^\text{next}}
\newcommand{\oprocendsymbol}{\hbox{$\bullet$}}
\newcommand{\oprocend}{\relax\ifmmode\else\unskip\hfill\fi\oprocendsymbol}
\title{Coordination of multi-agent systems via asynchronous cloud communication}
\author[1]{Sean L. Bowman}
\author[2]{Cameron Nowzari}
\author[3]{George J. Pappas}
\affil[1]{Computer and Information Science Department, University of Pennsylvania, Philadelphia, PA, USA}
\affil[2]{Electrical and Computer Engineering Department, George Mason University, Fairfax, VA, USA}
\affil[3]{Electrical and Systems Engineering Department, University of Pennsylvania, Philadelphia, PA, USA}
\begin{document}
  
\maketitle

\begin{abstract}
In this work we study a multi-agent coordination problem
in which agents are only able to communicate with each other intermittently 
through a cloud server. To reduce the amount of
required communication, we develop a self-triggered algorithm
that allows agents to communicate with the cloud only when necessary
rather than at some fixed period.
Unlike the vast majority of similar works that propose
distributed event- and/or self-triggered control laws, this work
doesn't assume agents can be ``listening'' continuously. In other
words, when an event is triggered by one agent, neighboring agents
will not be aware of this until the next time they establish communication
with the cloud themselves. Using a notion of ``promises'' about future control inputs, agents are able to keep track of higher quality estimates about their neighbors allowing them to stay disconnected from the cloud for longer periods of time while still guaranteeing a positive contribution to the global task. We prove that our self-triggered coordination algorithm
guarantees that the system asymptotically reaches the set of desired states.
Simulations illustrate our results.
\end{abstract}

\section{Introduction}

This paper considers a multi-agent coordination problem where agents
can only communicate with one another indirectly through the use of a central base
station or ``cloud.''
Small connected household devices that require communication and coordination with each other are become increasingly prevalent (the ``Internet of Things'').
To reduce both power consumption and bandwidth requirements for these small, low-power devices, it is ideal that they communicate as infrequently as possible with the cloud server. For instance, one can imagine a number of devices trying to coordinate through asynchronous communication with a dedicated cloud (e.g., email) server. In this setting, a device can only receive and send messages while connected to the server; however, being connected to the server at all times is a waste of energy and wireless resources. %
In this paper we present a method to facilitate the coordination of a number of agents through a cloud server that guarantees the completion of a global task while reducing the number of communications required and without the need for a device to continuously be in communication with the cloud server.

Specifically, we consider the more concrete related problem of coordinating a number of submarines that only can communicate with a base station while at the surface of the water.
While a majority of related works allow for an agent to push information to its neighbors at any desired time, communicating with the outside
world when underwater is extremely expensive, if not 
impossible~\cite{NAC-BMF-OK-ACM-CP-RP-DS:13,EF-NEL-PB-DAP-RB-DMF:06}, and so a submarine must perform all communication while surfaced. 

Each time a submarine surfaces, it must determine the next time to surface as well as the control law to use while underwater in order to adequately achieve some desired global task based only on information available on the server at that moment.
In this paper we are interested in designing a self-triggered coordination algorithm in which agents can autonomously schedule the next time to communicate
with the cloud based on currently available information. 
While we motivate our problem via an underwater
coordination problem in which communication while submerged is 
impossible, it is directly applicable to any scenario where
wireless-capable agents cannot be listening to communication 
channels continuously.

\emph{Literature review:}
In the context of the multi-agent coordination problem in general, the literature is extensive~\cite{olfati-saber07,ren-book,MM-ME:10}.
In our specific problem of multi-agent consensus, Olfati-Saber and Murray~\cite{olfati-saber03} introduce a continuous-time law that guarantees consensus convergence on undirected as well as weight-balanced digraphs.
However, the majority of these works assume agents can continuously, or at least periodically, obtain information about their neighbors.
Instead, when communication is expensive as in our case, we wish to minimize the number of times communication is necessary. 

A useful tool for determining discrete communication times in this manner is event-triggered control, where an algorithm is designed
to tune controller executions to the state evolution of a given system, see
e.g.,~\cite{KJA-BMB:02,WPMHH-KHJ-PT:12}. 
In particular, event-triggered control has been successfully applied to multi-agent systems with the goal of limiting computation and decision making to reduce overall communication, sensing, and/or actuation effort of the agents. 
In~\cite{MMJ-PT:10}, the authors formulate a threshold on system error to determine when control signals need to be updated. In~\cite{XW-MDL:11}, the authors expand on this and determine a distributed threshold for a wireless control network, further taking into account network errors such as communication delays and packet drops. Event-triggered ideas have also been applied to the acquisition of information rather than control.
Several approaches~\cite{GX-HL-LW-YJ:09,WPMHH-MCFD:13,XM-TC:13} utilize periodically sampled data to reevaluate the controller trigger. 
Zhong and Cassandras~\cite{MZ-CGC:10} additionally drop the need for periodic sampling, creating a distributed trigger to decide when to share data based only on local information. 

Event-triggered approaches generally require the persistent monitoring of some
triggering function as new information is being obtained. Unfortunately, this is not directly
applicable to our setup because the submarines only get new information when they are at the surface of the water.
Instead, self-triggered control~\cite{AA-PT:10,XW-MDL:09,CN-JC:11-auto} removes the need to continuously monitor the triggering function, instead requiring each agent to compute its next trigger time based solely on the information available at the previously triggered sample time. 

The first to apply these ideas to consensus, %
Dimarogonas et al.~\cite{DVD-EF-KHJ:12}, remove the need for continuous control by introducing an event-triggered rule to determine when an agent should update its control signal, however still requiring continuous information about their neighbors.
In~\cite{GSS-DVD-KHJ:13}, the authors further remove the need for continuous neighbor state information, creating a time-dependent triggering function to determine when to broadcast information.
The authors in~\cite{EG-YC-HY-PA-DC:13} similarly broadcast based on a state-dependent triggering function.
Recently, these ideas have been extended from undirected graphs to arbitrary directed ones~\cite{XM-TC:13,CN-JC:14-acc,XM-LX-YCS-CN-GJP:15}.

A major drawback of all aforementioned works is that they require all agents to be ``listening,'' or available to receive information, at all times.
Specifically, when any agent decides to broadcast information to its neighbors, it is assumed that all neighboring agents in the communication graph are able to instantaneously receive that information. Instead, we are interested in a situation where when an agent is disconnected from the cloud, it is incapable of communicating with other agents. 

In~\cite{PVT-DVD-KHJ-JS:10}, the authors study a very similar problem to the
one we consider here but develop an event-triggered solution in which all Autonomous Underwater Vehicles (AUVs) must surface together at the same time. Instead, we are interested in a strategy in which AUVs can autonomously surface asynchronously while still guaranteeing a desired stability property. This problem has very recently been looked at in~\cite{AD-DL-DVD-KHJ:15,CN-GJP:16,adaldo_cdc16} where the authors utilize event- and self-triggered coordination strategies to determine when the AUVs should resurface. In~\cite{AD-DL-DVD-KHJ:15}, a time-dependent triggering rule $\beta(\sigma_0, \sigma_1, \lambda_0, t)$ is developed that ensures practical convergence (in the presence of noise) of the whole system to the desired configuration. 
In~\cite{adaldo_cdc16} the authors present a similarly time-dependent triggering rule that allows agents to track a reference trajectory in the presence of noise. Instead, the authors in~\cite{CN-GJP:16} develop a state-dependent triggering rule with no explicit dependence on time; however, the self-triggered algorithm developed there is not guaranteed to avoid Zeno behaviors which makes it an incomplete solution to the problem. In this work we incorporate ideas of promises from team-triggered control~\cite{CN-JC:15-crc,CN-JC:16-tac} to develop a state-dependent triggering rule that guarantees asymptotic convergence to consensus while ensuring that Zeno behavior is avoided. 

\emph{Statement of contributions:} 
Our main contribution is the development of a novel distributed team-triggered algorithm
that combines ideas from self-triggered control with a notion of ``promises.'' These promises allow agents to make better decisions since they have higher quality information about their neighbors in general. Our algorithm incorporates these promises into the state-dependent trigger
to determine when they should communicate with the cloud. 
In contrast to~\cite{AD-DL-DVD-KHJ:15,CN-GJP:16}, our algorithm uses a state-dependent triggering rule with no explicit dependence on time, no global parameters, and no possibility of Zeno behavior. The main drawback of the time-dependent triggering rule~$\beta(\sigma_0, \sigma_1, \lambda_0, t)$ is that the choice of the constants $\sigma_0, \sigma_1, \lambda_0$ greatly affect the performance (number of events and convergence speed) of the system and there is no good way to choose these a priori; i.e., depending on the initial condition, different values of $\sigma_0, \sigma_1, \lambda_0$ will perform better. Instead, the state-dependent triggering rule developed here is more naturally coupled with the current state of the system. 
In general, distributed event- and self-triggered algorithms are designed so that agents
are \emph{never} contributing negatively to the global task, generally defined by the
evolution of a Lyapunov function~$V$. 
Instead,
our algorithm does not rely on this guarantee. More specifically, 
we actually allow an agent to be contributing negatively to the 
global task temporarily as long as it is accounted for by its net contribution over time.
Our algorithm guarantees the system converges asymptotically to consensus while ensuring that Zeno executions cannot occur. 
Finally, we illustrate our results through simulations.
\section{Problem Statement}

We consider system of $N$ submarine agents with single-integrator dynamics
\begin{align}\label{eq:dynamics}
  \dot{x}_i(t) = u_i(t),
\end{align}
for all $i \in \until{N}$, where we are interested
in reaching a consensus configuration, i.e. where 
$\TwoNorm{ x_i(t) - x_j(t) } \rightarrow 0$
as $t \rightarrow \infty$ for all $i, j \in \until{N}$. 
For simplicity, we consider scalar states $x_i \in \mathbb{R}$, 
but these ideas are extendable to arbitrary dimensions. 

Given a connected communication graph $\mathcal{G}$, it is well known~\cite{olfati-saber03} that the distributed continuous control law 
\begin{align}
u_i(t) &= -\sum_{j\in\mathcal{N}_i} \left( x_i(t) - x_j(t) \right) \label{eq:continuouslaw}
\end{align}
drives each agent of the system to asymptotically converge to the
average of the agents' initial conditions. In compact form, this can
be expressed by
\begin{align*}
  \dot x = -Lx ,
\end{align*}
where $x = [x_1\  \dots \ x_N]^T$ is the vector of all agent states and $L$ is the Laplacian of
$\commgraph$.  However, in order to be implemented, this control law
requires each agent to continuously have information about its
neighbors and continuously update its control law. 

Several recent works have been aimed at relaxing these 
requirements~\cite{XM-TC:13,CN-JC:14-acc,XM-LX-YCS-CN-GJP:15,GSS-DVD-KHJ:13}.
However, they all require agents to be ``listening'' continuously to their neighbors, i.e. when an event is triggered by one agent, its
neighbors are immediately aware and can take action accordingly.

Unfortunately, as we assume here that agents are unable to perform any communication while submerged,
we cannot continuously detect neighboring events that occur.
Instead, we assume that agents are only able to update their control
signals when their own events are triggered (i.e., when they are surfaced).  
Let $\{ t_i^\ell \}_{\ell \in \integernonnegative}$ be the sequence
of times at which agent~$i$ surfaces. Then, our algorithm is based on a piecewise 
constant implementation of the controller~\eqref{eq:continuouslaw}
given by
\begin{align}\label{eq:control}
u_i^\star(t) = - \sum_{j \in \NN_i} ( x_i(t_i^\ell) - x_j(t_i^\ell)), \quad t \in [t_i^\ell, t_i^{\ell+1} ).
\end{align}

\begin{remark}
{\rm Later we will allow the control input $u_i(t)$ to change in a limited way while agent $i$ is submerged, but for now we assume that the control is piecewise constant on the intervals $[t^\ell_i, t^{\ell+1}_i)$. Motivation for and details behind changing the control while submerged are discussed later in section~\ref{sec:exp}. \oprocend }
\end{remark}

The purpose of this paper is to develop a self-triggered algorithm that
determines how the the sequence of times~$\{ t_i^\ell \}$ and control inputs $u_i(t)$
can be chosen such that the system converges to the desired consensus statement. 
More specifically, each agent~$i$ at each surfacing time~$t_i^\ell$
must determine the next surfacing time~$t_i^{\ell+1}$ and control $u_i(t)$ only using information
available on the cloud at that instant. 
The closed loop system should then have trajectories
such that $|x_i(t) - x_j(t)| \rightarrow 0$ as $t \rightarrow \infty$ for all $i,j \in \until{N}$.
We describe the cloud communication model next.

\subsection{Cloud communication model}\label{se:comm}

We assume that there exists a base station or ``cloud'' that agents are able to upload data to and download data from when they are surfaced.
This cloud can store any finite amount of data, but can perform no computation. 
At any given time $t \in [t_i^\ell, t_i^{\ell+1})$, the cloud stores the following information about agent $i$:
the last time $\tlast_i(t) = t^\ell_i$ that agent $i$ surfaced, the next time $\tnext_i(t) = t_i^{\ell+1}$ that agent $i$ is scheduled to surface, 
the state $x_i(\tlast_i)$ of agent $i$ when it last surfaced, and the last control signal $u_i(\tlast_i)$ used by agent $i$.
The server also contains a control expiration time $\texp_i \leq \tnext_i$
and a \emph{promise} $M_i$ for each agent~$i$ which will be explained later in Section~\ref{sec:alg}. This information is summarized in Table~\ref{ta:communication}.

\begin{table}
\centering
\scriptsize
\begin{tabular}{|cl|}
\hline
$\timelast_i$ & Last time agent~$i$ surfaced \\
$\texp_i$ & Control expiration time of agent~$i$ \\
$\timenext_i$ & Next time agent~$i$ will surface \\
$x_i(\timelast_i)$ & Last updated position of agent~$i$ \\
$u_i(\timelast_i)$ & Last trajectory of agent~$i$ \\
$M_i(\timelast_i)$ & Most recent control promise from agent~$i$ \\
\hline
\end{tabular}
\caption{Data stored on the cloud for all agents~$i$ at any time~$t$.}\label{ta:communication}
\end{table}

For simplicity, we assume that agents can download/upload information to/from the cloud instantaneously. %
Let $t^\ell_i$ be a time at which agent $i$ surfaces to communicate with the cloud. 
The communication link is established at time $t^\ell_i$, and we immediately update $\tlast_i = t^\ell_i$ and $x_i(t_i^\ell)$ based on agent $i$'s current position.

While the link is open, agent $i$ downloads all the information in Table~\ref{ta:communication} for each neighbor $j\in\mathcal{N}_i$.
Using this information, agent $i$ (instantaneously) computes its control signal $u_i(t^\ell_i)$ and next surfacing time $t^{\ell+1}_i$ such that it knows it will make a net positive contribution to the consensus over the interval $[t^\ell_i, t^{\ell+1}_i)$. 
Finally, before closing the communication link and diving, agent $i$ calculates a promise $M_i$ bounding its future control inputs and uploads all data to the server.

\begin{remark}
{\rm Because of the existence of a centralized cloud server, it may be tempting to ask why the communication graph $\mathcal{G}$ is not always the complete graph $K_N$. 
Note that the amount of computation an agent does under our algorithm is quadratic in the number of neighbors $|\NN_i|$ (see Remark~\ref{rem:complexity}).
To ensure scalability for agents with limited computational capabilites as the number of agents in the network grows extremely large, then, it may be necessary to force a more limited communication topology.
Furthermore, especially with the increasing popularity of software defined networking, it is true that while any agent $i$ may be \emph{able} to communicate with any other agent $j$, it should be avoided whenever possible.  \oprocend }
\end{remark}

\begin{problem}
Given $N$ agents with dynamics~\eqref{eq:dynamics} and the communication model described in Section~\ref{se:comm}, for each agent~$i$, find an algorithm that prescribes when to communicate with the cloud based on currently available information and a control input $u_i(t)$ used in between communications $t \in [t_i^\ell, t_i^{\ell+1})$, such that  
\begin{align}
|x_j(t) - x_i(t)| \rightarrow 0
\end{align}
as $t \rightarrow \infty$ for all agents $i,j \in \{ 1, \dots, N \}$. 
\end{problem}

In the next section we describe this algorithm in detail.
\section{Distributed Trigger Design}
\label{sec:alg}
Consider the objective function 
\begin{align}
V(x(t)) &= \frac{1}{2} x^T(t) L x(t),
\end{align}
where $L$ is the Laplacian of the connected communication graph~$\mathcal{G}$. 
Note that $V(x) \geq 0$ for all $x \in \real^N$ and $V(x) = 0$ if and only if $x_i = x_j$ for all $i,j \in \{1, \dots, N\}$.
Thus, the function $V(x)$ encodes the objective of the problem and we are interested in driving $V(x) \rightarrow 0$. 
For simplicity, we drop the explicit dependence on time when referring to time~$t$. 

Taking the derivative of~$V$ with respect to time, we have
\begin{align}
\dot{V} &= \dot{x}^T L {x} 
= -\sum_{i=1}^N \dot{x}_i \sum_{j\in\NN_i}( x_j-x_i)
\end{align}

Let us split up $\dot{V} = \sum_{i=1}^N \dot{V}_i$, where
\begin{align}\label{eq:split}
\dot{V}_i \triangleq - \dot{x}_i \sum_{j\in\NN_i}( x_j-x_i).
\end{align}
Note that we have essentially distributed~$\dot{V}$ in a way that clearly shows how
each agent's motion directly contributes to the global objective, allowing us to write
\begin{align}
V(x(t)) %
&= V(x(0)) + \sum_{i=1}^N \int_0^t \dot{V}_i(x(\tau)) d\tau .
\end{align}

Ideally, we now wish to design a self triggered algorithm such that $\dot{V}_i(x(t)) \leq 0$ for all agents $i$ at all times $t$.
Thus at surfacing time $t^\ell_i$, agent $i$ must determine $t^{\ell+1}_i$ and $u_i(t)$ such that $\dot{V}_i(t) \leq 0$ for all $t \in [t_i^\ell, t^{\ell+1}_i)$. 

While in the fully developed algorithm we will allow an agent to modify its control while still submerged, for now we assume that the control input is constant on the entire submerged interval and defer the discussion of the control ``expiration time'' $\texp_i$; its motivation and (minor) modifications to the algorithm to section~\ref{sec:exp}.

Note that given the information agent $i$ downloaded from the server at time $t^\ell_i$, it is able to exactly compute the state of a neighboring agent $j\in\mathcal{N}_i$ up to the time it resurfaces $\tnext_j$. 
For any $t \in [t^\ell_i, \tnext_j]$, %
\begin{align}
x_j(t) = x_j(\tlast_j) + u_j(\tlast_j)(t - \tlast_j) .
\end{align}

At time $\tnext_j$, however, agent $j$ autonomously updates its control signal in a way unknown to agent $i$, making it difficult to determine how agent $i$ should move without surfacing. 
To remedy this, we borrow an idea of \emph{promises} from team-triggered control~\cite{CN-JC:16-tac}. 
Suppose that although we don't know $\dot{x}_j(t)$ exactly for $t > T_i$, we have access to some bound $M_j(t) > 0$ such that 
$|\dot{x}_j(t)| \leq M_j(t)$.

Using this information, we introduce the notion of agent $j$'s reachable set as determinable by agent $i$. 
For any $j \in \NN_i$, let $R^i_j(t)$ be the set of states at time $t$ within which agent $i$ can determine that agent $j$ must be in. 
For $t \leq \tnext_j$, agent $i$ is able to determine $x_j(t)$ exactly and so $R^i_j(t) = \{x_j(t)\}$ is a singleton containing agent j's exact position.
For $t > \tnext_j$, as all agent $i$ knows is a bound on agent $j$'s control law, $R^i_j$ is a ball that grows at a rate determined by agent $j$'s promise $M_j$:
\begin{align}
R^i_j(t) = \left\{ \begin{array}{cc}  \{x_j(\tlast_j) + u_j(\tlast_j)(t - \tlast_j) \} & t \leq \tnext_j , \\
                            B( x_j(\tnext_j), M_j(t) (t-\tnext_j) ) & \textrm{otherwise,} \end{array}\right.
\end{align}
where $B(x, r)$ is a closed ball of radius $r$ centered at $x$. 
$R^i_i$ is simply the singleton
\begin{align}
R^i_i(t) = \{ x_i(\tlast_i) + u_i(\tlast_i) (t-\tlast_i) \} .
\end{align}

We can now express the latest time that agent $i$ can still be sure it is contributing positively to the objective.
\begin{definition}
$T^\star_i$ is the first time $T^\star_i \geq t^\ell_i$ after which agent $i$ can no longer guarantee it is positively contributing to the objective, the solution to the following:
\begin{equation}
\begin{aligned}
& \underset{t \geq t^\ell_i}{\textnormal{infimum}}
& & t \\
& \textnormal{subject to}
& & \max_{x(t) \in R^i(t)} \dot{V}_i(x(t)) > 0 ,
\end{aligned}
\label{eq:t_opt}
\end{equation}
where $R^i(t)$ is defined as the set of all states $x_j(t)$ for $j \in \NN_i \cup \{ i \}$ such that each $x_j(t)$ satisfies $x_j(t) \in R^i_j(t)$. 
\end{definition}

It is easy to compute the solution to~\eqref{eq:t_opt} exactly given the structure of $R^i_j(t)$ given above. 
Let $\pi_t$ be a sorted ordering on the next surfacing times of all of agent $i$'s neighbors, i.e. let $\pi_t : [|\NN_i|] \rightarrow \NN_i$ be a one-to-one function such that $\tnext_{\pi_t(1)} \leq \tnext_{\pi_t(2)} \leq \ldots \leq \tnext_{\pi_t(|\NN_i|)}$.
We abuse notation slightly by additionally setting $\tnext_{\pi(0)} = \tlast_i$ and $\tnext_{\pi(|\NN_i| + 1)} = \infty$ so the union of the intervals $[\tnext_{\pi(k)}, \tnext_{\pi(k+1)})$ for $k \in \{0, 1, \ldots, |\NN_i|\}$ covers all $t \geq \tlast_i$. 

\begin{proposition}
\label{prop:tstar}
Let $\tau^{\star,(k)}_i$ be the solution to the following optimization:
\begin{equation}
\begin{aligned}
& \underset{t}{\textnormal{infimum}}
& & t \\
& \textnormal{subject to} 
  & & \tnext_{\pi(k)} \leq t \leq \tnext_{\pi(k+1)} , \\
 &&&\sum_{m'=1}^k \alpha_{i{\pi(m')}}(\tnext_{\pi(m')}) + \sum_{m=k+1}^{|\NN_i|} \alpha_{i{\pi(m)}}(\tlast_i)  \\
  &&& \ \ \ \  +    \sum_{m'=1}^k (t-\tnext_{\pi(m)}) \gamma_{i{\pi(m')}}(\tlast_i)  \\
   &&&\ \ \ \ + \sum_{m=k+1}^{|\NN_i|}  (t-\tnext_{\pi(m)})\beta_{i{\pi(m)}}(\tlast_i) > 0 ,
\label{eq:t_opt_constrained_prop}
\end{aligned}
\end{equation}
where 
\begin{gather}
\alpha_{ij}(t) \triangleq -u_i(t) (x_j(t) - x_i(t)) \label{eq:alpha}, \\
\beta_{ij}(t) \triangleq -u_i(t)(u_j(t) - u_i(t)) \label{eq:beta}, \\
\gamma_{ij}(t) \triangleq |u_i(t)| M_j(t) + u_i(t)^2 .
\label{eq:gamma}
\end{gather}
Then, the solution $T^\star_i$ to~\eqref{eq:t_opt} can be computed exactly as 
\begin{align}
  T^\star_i = \min \ \{ \tau^{\star,(k)}_i \ | \ k \in \{0, \ldots, |\NN_i|\}  \} .
\end{align}
\end{proposition}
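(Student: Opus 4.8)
The plan is to evaluate the inner maximization in~\eqref{eq:t_opt} in closed form, to recognize the resulting worst-case value of $\dot V_i$ as a continuous, piecewise-affine function of $t$ whose only breakpoints are the neighbors' scheduled resurfacing times $\tnext_j$, and then to observe that the earliest time this envelope becomes positive is obtained by minimizing, over the finitely many affine pieces, the first time each piece becomes positive within its interval of validity.

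First I would use that agent~$i$'s control is the constant $u_i(\tlast_i)$ on $[\tlast_i,\tnext_i)$, so that along the closed loop $\dot x_i = u_i(\tlast_i)$ and $\dot V_i(x(t)) = -u_i(\tlast_i)\sum_{j\in\NN_i}(x_j(t)-x_i(t))$; this is affine and, crucially, \emph{separable} in the variables $x_j(t)$, $j\in\NN_i$. Hence $\max_{x(t)\in R^i(t)}\dot V_i$ decomposes into independent scalar maximizations of $-u_i(\tlast_i)\,x_j(t)$ over $R^i_j(t)$. For $t\le\tnext_j$ that set is a singleton, so its maximizer is agent~$i$'s exact extrapolation of $x_j$; for $t>\tnext_j$ it is $B(x_j(\tnext_j),M_j(t)(t-\tnext_j))$, and since $\max_{y\in B(c,r)}(-u_i y)=-u_i c+|u_i|\,r$ the maximum acquires the extra term $|u_i(\tlast_i)|\,M_j\,(t-\tnext_j)$. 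Substituting the affine-in-$t$ state expressions and collecting terms, the worst-case contribution of neighbor $j$ equals $\alpha_{ij}(\tlast_i)+\beta_{ij}(\tlast_i)(t-\tlast_i)$ while $j$ is still submerged and $\alpha_{ij}(\tnext_j)+\gamma_{ij}(\tlast_i)(t-\tnext_j)$ once $j$ has resurfaced (the $u_i^2$ in both $\beta$ and $\gamma$ being the contribution of $\dot x_i$). Summing over $j\in\NN_i$, on each interval $[\tnext_{\pi(k)},\tnext_{\pi(k+1)}]$ the envelope is exactly the affine function of $t$ in the constraint of~\eqref{eq:t_opt_constrained_prop}. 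I expect this bookkeeping -- tracking which neighbors are submerged on each interval and matching the $\alpha,\beta,\gamma$ pattern -- to be the most laborious part, although it is entirely elementary; it also uses that the promises $M_j$ are constant bounds here, so that the envelope is genuinely affine on each piece.

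Next I would verify continuity at the breakpoints. At $t=\tnext_{\pi(k)}$ the reachable ball of neighbor $\pi(k)$ has zero radius, and the identity $\alpha_{ij}(\tnext_j)=\alpha_{ij}(\tlast_i)+\beta_{ij}(\tlast_i)(\tnext_j-\tlast_i)$ -- immediate from~\eqref{eq:alpha}--\eqref{eq:beta} once $x_i$ and $x_j$ are written as linear extrapolations from $\tlast_i$ -- shows that the two affine pieces meeting there agree. Together with $\dot V_i(x(\tlast_i))\le 0$ (which holds for the control $u_i^\star$ of~\eqref{eq:control}), this makes the envelope $g(t):=\max_{x(t)\in R^i(t)}\dot V_i(x(t))$ continuous, piecewise affine on $[\tlast_i,\infty)$, and nonpositive at $t=\tlast_i$.

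Finally, for such a $g$, the earliest time it exceeds zero is found by scanning the pieces in order: on every interval strictly before the first crossing, $g$ -- hence the corresponding affine piece -- stays nonpositive, so the constrained infimum $\tau^{\star,(k)}_i$ is $+\infty$ there; on the interval containing the crossing the affine piece coincides with $g$, so $\tau^{\star,(k)}_i$ equals $T^\star_i$; and any later piece has $\tau^{\star,(k)}_i\ge\tnext_{\pi(k)}\ge T^\star_i$. Hence $T^\star_i=\min\{\tau^{\star,(k)}_i : k\in\{0,\dots,|\NN_i|\}\}$, as claimed. The covering property of the intervals $[\tnext_{\pi(k)},\tnext_{\pi(k+1)})$, together with the fact that each subproblem is restricted to its own interval, is what makes this decomposition exhaustive.
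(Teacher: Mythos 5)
Your proposal is correct and follows essentially the same route as the paper's proof: a per-neighbor decomposition of $\dot V_i$, exact affine expressions up to $\tnext_j$ and the promise-based bound $\alpha_{ij}(\tnext_j)+\gamma_{ij}(\tlast_i)(t-\tnext_j)$ afterwards, then partitioning $[\tlast_i,\infty)$ by the sorted resurfacing times and taking the minimum of the per-interval infima. The only (minor, welcome) difference is that you evaluate the worst case by maximizing $-u_i x_j$ exactly over the ball $R^i_j(t)$, which makes explicit the tightness that the paper leaves implicit when it passes from its upper bound to the claim that $T^\star_i$ is computed exactly.
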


\begin{proof}
See Appendix~\ref{sec:prop1_proof}.
\end{proof}

It is additionally possible to allow agent $i$ to remain submerged for longer by allowing $\dot{V}_i$ to temporarily become positive, as long as we select $t^{\ell+1}_i$ such that the total contribution to the objective $V$ on the interval $[t^\ell_i, t^{\ell+1}_i)$, 
\begin{align}
\Delta V_i^\ell \triangleq \int_{t^\ell_i}^{t^{\ell+1}_i}\dot{V}_i(\tau)d\tau, \label{eq:vdot_integral}
\end{align}
is nonpositive.

\begin{definition}
$T^{\textrm{total}}_i$ is the first time $T^{\textrm{total}}_i \geq t^\ell_i$ after which agent $i$ can no longer guarantee its total contribution over the submerged interval is positive, the solution to the following:
\begin{equation}
\begin{aligned}
& \underset{t \geq t^\ell_i}{\textnormal{infimum}}
& & t \\
& \textnormal{subject to}
& & \max_{x(t) \in R^i(t)} \int_{\tlast_i}^t \dot{V}_i(\tau)d\tau > 0 ,
\end{aligned}
\label{eq:t_int_opt}
\end{equation}
\end{definition}

To compute $T^{\textrm{total}}_i$, we follow a similar approach.

\begin{proposition}
\label{prop:ttotal}
Let $\tau^{tot,(k)}_i$ be the solution to the following optimization:
\begin{equation}
\begin{aligned}
& \underset{t}{\textnormal{infimum}}
& & t \\
& \textnormal{subject to}
& &\tnext_{\pi(k)} \leq t \leq \tnext_{\pi(k+1)} , \\
&&& \sum_{m' = 1}^k \Big[ \alpha_{i\pi(m')}(\tlast_i) (\tnext_{\pi(m')}-\tlast_i) + \frac{1}{2} \beta_{i{\pi(m')}}(\tlast_i) (\tnext_{\pi(m')}-\tlast_i)^2 \\
                     &&&\ + \alpha_{i{\pi(m')}}(\tnext_{\pi(m')})(t-\tnext_{\pi(m')}) + \frac{1}{2} \gamma_{i{\pi(m')}}(\tlast_i) (t-\tnext_{\pi(m')})^2 \Big] \\
  &&& + \sum_{m=k+1}^{|\NN_i|}  \Big[ \alpha_{i{\pi(m)}} (\tlast_i) (t-\tlast_i) + \frac{1}{2} \beta_{i{\pi(m)}}(\tlast_i) (t-\tlast_i)^2 \Big] > 0 .
\label{eq:vint_opt_constrained_prop}
\end{aligned}
\end{equation}
Then, the solution $T^{\textrm{total}}_i$ to~\eqref{eq:t_int_opt} can be computed as
\begin{align}
T^{\textrm{total}}_i = \min \ \{ \tau^{tot,(k)}_i \ | \ k \in \{ 0, \dots, |\NN_i| \} \}
\end{align}
\end{proposition}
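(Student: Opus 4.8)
The plan is to mirror the proof of Proposition~\ref{prop:tstar}. Writing $g(t)\triangleq\max\int_{\tlast_i}^t\dot V_i(\tau)\,d\tau$, where the maximum is over all trajectories $\{x(\tau)\}_{\tau\ge\tlast_i}$ consistent with the reachable tube (i.e.\ $x_j(\tau)\in R^i_j(\tau)$ for every $j$ and $\tau$), the definition~\eqref{eq:t_int_opt} reads $T^{\textrm{total}}_i=\inf\{t\ge\tlast_i:g(t)>0\}$. I would first partition $[\tlast_i,\infty)$ into the intervals $I_k\triangleq[\tnext_{\pi(k)},\tnext_{\pi(k+1)})$, $k\in\{0,\dots,|\NN_i|\}$ (with the conventions $\tnext_{\pi(0)}=\tlast_i$, $\tnext_{\pi(|\NN_i|+1)}=\infty$ from the statement): on $I_k$ exactly the neighbors $\pi(1),\dots,\pi(k)$ have already resurfaced and are thus in ``ball mode,'' while $\pi(k+1),\dots,\pi(|\NN_i|)$ remain exactly predictable. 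Since these intervals cover $[\tlast_i,\infty)$, it suffices to evaluate $g$ on each $I_k$ and then show $T^{\textrm{total}}_i=\min_k\inf\{t\in I_k:g(t)>0\}=\min_k\tau^{tot,(k)}_i$.

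The core step is evaluating $g$ in closed form on a fixed $I_k$. By~\eqref{eq:split}, $\dot V_i(\tau)=\sum_{j\in\NN_i}\dot V_{ij}(\tau)$ with $\dot V_{ij}(\tau)\triangleq-u_i(\tau)\big(x_j(\tau)-x_i(\tau)\big)$, and because $u_i$ is constant over the submerged interval and $x_i(\tau)$ is known exactly, each $\dot V_{ij}$ is affine in the single quantity $x_j(\tau)$ and depends on a different, independent reachable tube; hence the maximization of $\int\sum_j\dot V_{ij}$ splits into a sum of independent per-neighbor maximizations. For a still-exact neighbor $j=\pi(m)$ with $m>k$, $\dot V_{ij}(\tau)$ is affine in $\tau$ with value $\alpha_{ij}(\tlast_i)$ at $\tlast_i$ and slope $\beta_{ij}(\tlast_i)$ (by~\eqref{eq:alpha}--\eqref{eq:beta}, using $\tfrac{d}{d\tau}\dot V_{ij}=-u_i(u_j-u_i)$), so $\int_{\tlast_i}^t\dot V_{ij}=\alpha_{ij}(\tlast_i)(t-\tlast_i)+\tfrac12\beta_{ij}(\tlast_i)(t-\tlast_i)^2$, which is the last sum in~\eqref{eq:vint_opt_constrained_prop}. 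For a ball-mode neighbor $j=\pi(m')$ with $m'\le k$, the integrand is exact on $[\tlast_i,\tnext_j]$ (contributing the first bracketed line of~\eqref{eq:vint_opt_constrained_prop}), while on $[\tnext_j,t]$ the value $\dot V_{ij}(\tau)=-u_ix_j(\tau)+u_ix_i(\tau)$ is maximized over $x_j(\tau)\in B\big(x_j(\tnext_j),M_j(\tau-\tnext_j)\big)$ at $x_j(\tau)=x_j(\tnext_j)-\sgn{u_i}\,M_j(\tau-\tnext_j)$; substituting this and $x_i(\tau)=x_i(\tnext_j)+u_i(\tau-\tnext_j)$ gives $\dot V_{ij}(\tau)=\alpha_{ij}(\tnext_j)+\gamma_{ij}(\tlast_i)(\tau-\tnext_j)$ by~\eqref{eq:gamma}, whose integral from $\tnext_j$ to $t$ is the second bracketed line of~\eqref{eq:vint_opt_constrained_prop}. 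Summing over all neighbors reproduces exactly the left-hand side of the constraint there, so on $I_k$ the condition $g(t)>0$ is precisely that constraint and $\tau^{tot,(k)}_i=\inf\{t\in I_k:g(t)>0\}$.

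Finally I would tie things together and flag the subtleties. The interchange of $\max$ and $\int$ used above is valid because the pointwise-worst choice $x_j(\tau)=x_j(\tnext_j)-\sgn{u_i}M_j(\tau-\tnext_j)$ is itself an admissible trajectory for agent~$j$: it is continuous and obeys $|\dot x_j|\le M_j$, so the supremum over admissible tubes is attained and equals the integral of the pointwise supremum. Moreover $g$ is continuous — the piecewise-quadratic pieces agree at each breakpoint $\tnext_{\pi(k)}$, since the exact and ball descriptions of $x_j$ coincide at $\tnext_j$ — and $g(\tlast_i)=0$; hence $\{t\ge\tlast_i:g(t)>0\}$ is open, its infimum is approached from within exactly one of the $I_k$, and with the convention $\inf\emptyset=+\infty$ (so infeasible intervals contribute nothing) we obtain $T^{\textrm{total}}_i=\min_k\tau^{tot,(k)}_i$. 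The main obstacle I anticipate, as in Proposition~\ref{prop:tstar}, is precisely this max/integral interchange — confirming that the pointwise worst-case state trajectory is realizable under the promise bound and that the per-neighbor contributions to the integrand are genuinely decoupled; once that is granted, the remainder is bookkeeping of the piecewise-quadratic accumulation.
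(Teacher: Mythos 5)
Your proposal is correct and follows essentially the same route as the paper's proof: the per-neighbor decomposition of the integral, exact quadratic accumulation up to each neighbor's resurfacing time, the promise-based bound with $\alpha_{ij},\beta_{ij},\gamma_{ij}$ afterwards, and a per-interval ($[\tnext_{\pi(k)},\tnext_{\pi(k+1)}]$) constrained infimum minimized over $k$. The only difference is that you additionally verify tightness of the worst-case bound (realizability of the extremal trajectory $x_j(\tau)=x_j(\tnext_j)-\sgn{u_i}M_j(\tau-\tnext_j)$), a point the paper leaves implicit since it only derives the upper bound~\eqref{eq:v_int_ubound2}.
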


\begin{proof}
See Appendix~\ref{sec:prop2_proof}.
\end{proof}

\begin{remark}
{\rm
Although the optimization constraints  in Propositions~\ref{prop:tstar} and~\ref{prop:ttotal} appear complex, note that they are linear or quadratic in $t$ and so the infimums can be solved for easily.
Consider a problem of the form
\begin{equation}
\begin{aligned}
& \underset{t}{\textnormal{infimum}}
& & t \\
& \textnormal{subject to}
& & g(t) > 0 , \\
&&& t_1 \leq t \leq t_2 ,
\label{eq:example_opt}
\end{aligned}
\end{equation}
where $g(t)$ is a polynomial in $t$. 

Let $r_1 \leq r_2 \leq \dots \leq r_K$ be the roots of $g$ that lie in the interval $(t_1, t_2)$, and let $r_0 = t_1$ and $r_{K+1} = t_2$. 
The solution $t^\star$ to~\eqref{eq:example_opt} is the smallest $r_i$, $i=0,\dots,K$, such that $g(r_i) \geq 0$ and $g(\frac{1}{2}(r_i + r_{i+1})) > 0$. 
If no such $r_i$ exists, $t^\star = \infty$.
}\oprocend
\end{remark}

\begin{remark}
{\rm
\label{rem:complexity}
The computation of the constraint coefficients in~\eqref{eq:vint_opt_constrained_prop} and~\eqref{eq:t_opt_constrained_prop} takes $O(|\NN_i|)$ time, and in both cases $|\NN_i| + 1$ such problems must be solved.
Thus, computation of $T^\star$ and $T^{\textrm{total}}$ both takes $O(|\NN_i|^2)$ time.
}\oprocend
\end{remark}

Selecting $t^{\ell+1} = T^\star_i$ ensures that $\dot{V_i} < 0$ over the submerged interval, ensuring that agent $i$ is making progress towards the global objective at all $t$.
Selecting $t^{\ell+1} = T^\textrm{total}_i$ introduces a trade-off; while this time allows the agent to remain submerged for longer, as it allows some positive contribution to the objective function, overall progress is slower.
Thus, we propose a tuning parameter $\sigma_i \in [0,1]$, selecting a time $t^{\ell+1}_i$ such that $T^\star_i \leq t^{\ell+1} \leq T^\textrm{total}_i$:
\begin{align}
  t^{\ell+1}_i = (1-\sigma_i) T^\star_i + \sigma_i T^\textrm{total}_i .
\end{align}
By continuity of $\dot{V}_i$ and the definitions of $T^\star_i$ and $T^\textrm{total}_i$, it is guaranteed for $t_i^{\ell+1} \in [ T^\star_i, T^\textrm{total}_i]$ that we still have $\Delta V_i^\ell \leq 0$ (as defined in~\eqref{eq:vdot_integral}) with this definition.
Setting all $\sigma_i$ near 0 allows faster convergence with more frequent surfacing, while $\sigma_i$ near 1 results in slower convergence but less frequent surfacing. 

\subsection{Selecting promises $M_j$}
\label{sec:promise}
As it isn't possible in general for agent $i$ to bound a neighbor agent $j$'s future control inputs from past state and control information, instead each agent makes a \emph{promise} $M_i$ about its future control inputs each time it connects to the server.
In the preceding section, we assumed that the bound $|\dot{x}_j(t)| \leq M_j(t)$ was satisfied at all times without describing how to make it so.
Here, we describe how to co-design the control laws $u_i(t)$ and promises $M_i(t)$ to ensure that this actually holds at all times.

Let $M_i^\ell$ be the promise made by agent $i$ at time $t^\ell_i$. 
From the constraints in Propositions~\ref{prop:tstar},~\ref{prop:ttotal}, it is clear that the smaller $M_j$ is for any $j \in \NN_i$, the longer agent $i$ is able to stay submerged. 
However, limiting the control too much below the ideal control \eqref{eq:control} will slow convergence.

We consider a promise rule in which at time $t_i^\ell$ agent $i$ sets its promise to be a function of $|u^\star_i(t^\ell_i)|$:
\begin{align}
  M^\ell_i &= f \left( \left\vert u^\star_i(t^\ell_i) \right\vert \right) .
\label{eq:promise}
\end{align}
For example, $f(x) = cx$ provides a parameter $c$ that effectively allows another trade-off between convergence speed and communication frequency.
Note however that this does not mean agent $i$ can use its ideal control law at all times; if the new desired input is greater in magnitude than a previous promise, to remain truthful to previous promises agent $i$ must wait until the new promise has been received by all of its neighbors when they surface before it can use its desired control input.

Let $\tau_{ij}^\ell$ be the time that agent $j$ sees agent $i$'s $\ell$th promise, i.e. $\tau_{ij}^\ell = \tnext_j(t^\ell_i)$. 
When submerging for an interval $[t^\ell_i, t^{\ell+1}_i)$, agent $i$ needs to guarantee that all promises $M_i$ currently believed by $j\in\NN_i$ are abided by.

Let $\plast_{ij}(t)$ be the most recent promise by agent $i$ that agent $j$ is aware of at time $t$, i.e. 
\begin{align}
  \plast_{ij}(t) &= \underset{\ell\ : \ \tau^\ell_{ij} \leq t}{\arg \max} \ \tau_{ij}^\ell ,
\end{align}
and let $\mathcal{P}_i^\ell$ be the set of promise indices that agent $i$ must abide by when submerging on $[t^\ell_i, t^{\ell+1}_i)$, i.e.
\begin{align}
  \mathcal{P}^\ell_i &= \left\{\ \plast_{ij}(t) \ |\ j \in \NN_i, t \in [t^\ell_i, t^{\ell+1}_i) \ \right\} .
\end{align}

To abide by all promises that agent $i$'s neighbors believe about its controls, then, it simply needs to bound its control input magnitude by 
\begin{align}
  u^\textrm{max}_i(t^\ell_i) = \min_{k \in \mathcal{P}^\ell_i} M^k_i. \label{eq:umax}
\end{align}

With this bound, the actual control law used and uploaded by agent $i$ on the interval $[t^\ell_i, t^{\ell+1}_i)$ is given by bounding the ideal control magnitude by $u^\textrm{max}_i(t^\ell_i)$, or
\begin{align}
\label{eq:realcontrol}
u_i(t^\ell_i) &= \left\{ \begin{array}{ll} u^\star_i(t^\ell_i) & \left\vert u^\star_i(t^\ell_i) \right\vert \leq u^\textrm{max}_i(t^\ell_i) ,  \\
                                                          u^\textrm{max}_i(t^\ell_i) \frac{u^\star_i(t^\ell_i)}{\left\vert u^\star_i(t^\ell_i)\right\vert} & \textrm{otherwise.} \end{array} \right.
\end{align}

\subsection{Maximum submerged time}
\label{sec:max_time}
The method presented thus far is almost complete; however, consider the case in which a subset of the communication graph has locally reached ``consensus'' while the system as a whole has not.
If there is some agent $i$ such that at agent $i$'s next surfacing time $t^\ell_i$ we have $x_i(t^\ell_i) = x_j(t^\ell_i)$ for all $j \in \NN_i$, and furthermore that $u_j(t^\ell_i) = 0$ for all $j \in \NN_i$, then it would set its next triggering time to infinity. 
 An examination of the constraints in~\eqref{eq:t_opt_constrained_prop} and~\eqref{eq:vint_opt_constrained_prop} then reveals that the feasible set in both cases is equal to the empty set~$\emptyset$. The times $T^\star_i$ and $T^{\textrm{total}}_i$ will thus be chosen as $\inf \emptyset = \infty$.

In order to guarantee consensus in all situations, and as it is impossible for agent $i$ to obtain information outside of its immediate neighbors, it is necessary to introduce a maximum submerged time $\tmax$. 
If an agent $i$ computes a $\tideal_i$ such that $\tideal_i - \tlast_i > \tmax$, the agent instead chooses $\tnext_i = \tlast_i + \tmax$.
This ensures that despite a region of the communication network being at local ``consensus,'' no agent will effectively remove itself from the system and information will continue to propagate.

\subsection{Avoiding Zeno behavior}
\label{sec:exp}
While the presented method of selecting surfacing times guarantees convergence, it is susceptible to Zeno behavior, i.e. requiring some agent $i$ to surface an infinite number of times in a finite time period. 
To avoid this behavior, we introduce a fixed dwell time $T^\textrm{dwell}_i > 0$, and force each agent to remain submerged for at least a duration of $\tdwell_i$. Unfortunately, this means that
in general, there may be times at which an agent~$i$ is forced to remain submerged even when it does not know how to move to contribute positively to the global task (or it may not even be possible if it is at a local minimum).
Remarkably, from the way have have distributed~$\dot{V}$ using~\eqref{eq:split}, if agent $i$ sets $u_i(t) = 0$, its instantaneous contribution to the global objective is exactly 0. 

Thus, we allow an agent's control to change while it is submerged and modify the control law described in the previous section as follows.
If the chosen ideal surfacing time $\tideal_i = (1-\sigma_i)T^\star_i+\sigma_i T_i^\textrm{total}$ is greater than or equal to $t^\ell_i + \tdwell_i$, then nothing changes; agent $i$ sets its next surfacing time $t^{\ell+1}_i = \tideal_i$ and uses the control law \eqref{eq:realcontrol} on the entire submerged interval.

If, on the other hand, $\tideal_i  < t^\ell_i + \tdwell_i$, we let agent $i$ use the usual control law until $\tideal_i$, until which it knows it can make a positive contribution. 
After $\tideal_i$, agent $i$ no longer is certain it can make a positive contribution to the global objective.
Thus, we force agent $i$ to remain still until it has been submerged for a dwell time duration. 
In other words, we set $\tnext = t^\ell_i + \tdwell_i$, $\texp =\tideal_i$ and use control law \eqref{eq:realcontrol} on the interval $[t^\ell_i, \texp_i)$.

For $t \in [\texp_i, t^{\ell+1}_i)$, we then set $u_i(t) = 0$, and note that because $\dot{V}_i(t) = 0$ on this interval we still have the desired contribution to the global objective
\begin{align}
 \int_{t^\ell_i}^{t^{\ell+1}_i}\dot{V}_i(\tau)d\tau %
  &= \int_{t^\ell_i}^{\texp_i}\dot{V}_i(\tau)d\tau < 0.
\end{align}

Agent $i$ is then still able to calculate the position of any neighbor $j$ exactly for any $t < \tnext_j$ using information available on the cloud by 
\begin{align}
\label{eq:neighbors}
x_j(t) = \left\{ \begin{array}{ll} x_j(\tlast_j) + u_j(\tlast_j)(t - \tlast_j) & t < \texp_j , \\ 
                                              x_j(\tlast_j) + u_j(\tlast_j)(\texp_j - \tlast_j) & \textrm{otherwise.}\end{array} \right.
\end{align}

An overview of the fully synthesized self-triggered coordination algorithm is presented in Algorithm~1.
Next, we present the main convergence result of this algorithm.

\begin{algorithm}[t]
{\footnotesize
  At surfacing time $t_i^\ell$, 
  agent $i \in \until{N}$  performs:
  \begin{algorithmic}[1]
    \State download $\timelast_j, \texp_j, \timenext_j, x_j(\timelast_j), u_i(\timelast_j), M_j$ for all $j \in \NN_i$ from cloud
    \State compute neighbor positions $x_j(t_i^\ell)$ using \eqref{eq:neighbors}
    \State compute ideal control $u_i^\star(t^\ell_i) = - \sum_{j \in \NN_i} (x_i(t_i^\ell) - x_j(t_i^\ell))$
    \State compute $u^\textrm{max}_i(t^\ell_i)$ using \eqref{eq:umax} and saved $\tau_{ij}$ data
    \State compute control $u_i(t_i^\ell)$ with \eqref{eq:realcontrol}
    \State compute $T_i^\star$ as the solution to~\eqref{eq:t_opt}
    \State compute $T^{\textrm{total}}$ as the solution to~\eqref{eq:t_int_opt}
    \State set $t^{\textrm{ideal}}_i = (1 -\sigma_i)T_i^\star + \sigma_i T^{\textrm{total}}_i$
    \If {$\tideal_i > t^\ell_i + \tmax$}
      \State set $\texp_i = t^{\ell+1}_i = t^\ell_i + \tmax$
    \ElsIf {$\tideal_i < t^\ell_i + \tdwell_i$}
      \State set $\texp_i = \tideal_i$
      \State set $t^{\ell+1}_i = t^\ell_i + \tdwell_i$
    \Else
      \State set $\texp_i = t^{\ell+1}_i = \tideal_i$
    \EndIf
    \State upload promise $M_i = \left\vert u^\star_i(t^\ell_i) \right\vert$ to cloud
    \State upload $\timelast_i = t_i^\ell$, $\timenext_i = t_i^{\ell+1}$, $\texp_i$, $u_i(t_i^\ell)$,  $x_i(t_i^\ell)$ to cloud
    \State dive and set $u_i(t) = u_i(t_i^\ell)$ for $t \in [t_i^\ell, \texp_i)$, $u_i(t) = 0$ for $t \in [\texp_i, t_i^{\ell+1})$
  \end{algorithmic}}
\caption{\hspace*{-.5ex}: \small \algoFull}\label{tab:full}
\end{algorithm}

\begin{theorem}
Given the dynamics \eqref{eq:dynamics} and $\mathcal{G}$ connected, if the sequence of times $\{t_i^\ell\}$ and control laws $u_i(t^\ell_i)$ are determined by Algorithm 1 for all $i \in \{1, \dots, N\}$, then 
\begin{align}
  |x_i(t) - x_j(t)|  \rightarrow 0
\end{align}
for all $i,j\in\{1, \dots, N\}$ as $t \rightarrow \infty$.
\end{theorem}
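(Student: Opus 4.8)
The plan is to argue via a LaSalle-type invariance / summability argument applied to the Lyapunov function $V(x) = \tfrac12 x^T L x$. The key structural facts established earlier are: (i) the decomposition $V(x(t)) = V(x(0)) + \sum_{i=1}^N \int_0^t \dot V_i(x(\tau))\,d\tau$ with $\dot V_i$ as in~\eqref{eq:split}; (ii) for each agent~$i$ and each submerged interval $[t_i^\ell, t_i^{\ell+1})$, the choice $t_i^{\ell+1} \in [T_i^\star, T_i^{\mathrm{total}}]$ (possibly truncated by $\tmax$, and with $u_i$ set to zero on $[\texp_i, t_i^{\ell+1})$) guarantees $\Delta V_i^\ell = \int_{t_i^\ell}^{t_i^{\ell+1}} \dot V_i(\tau)\,d\tau \le 0$; and (iii) the dwell time $\tdwell_i > 0$ together with $\tmax$ means each agent surfaces finitely often on any bounded time interval but infinitely often overall (assuming the execution is not eventually frozen — which I address below). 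First I would observe that since every per-interval contribution $\Delta V_i^\ell$ is nonpositive, $V(x(t))$ is nonincreasing in $t$; being bounded below by $0$, it converges to some limit $V^\infty \ge 0$. In particular $x(t)$ stays in a compact sublevel set, so all states, ideal controls $u_i^\star$, actual controls $u_i$, and promises $M_i$ are uniformly bounded, and (by the dwell time) trajectories are globally defined with no Zeno accumulation.

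Next I would show $V^\infty = 0$. Summing over all agents and all their surfacing events, $\sum_{i,\ell} |\Delta V_i^\ell| = V(x(0)) - V^\infty < \infty$, so $\Delta V_i^\ell \to 0$ along the (infinite) sequence of events of each agent. The crux is then to show that if $V^\infty > 0$ we reach a contradiction. Consider the "locked" agents: by the $\tmax$ mechanism of Section~\ref{sec:max_time}, no agent can be submerged longer than $\tmax$, so at a dense (in fact $\tmax$-spaced) set of times every agent re-evaluates its ideal control $u_i^\star(t_i^\ell) = -\sum_{j\in\NN_i}(x_i(t_i^\ell)-x_j(t_i^\ell))$ using exact neighbor positions (available via~\eqref{eq:neighbors} since it downloads $\texp_j, \timenext_j$ etc.). If $V^\infty > 0$ then $Lx \not\to 0$, so there is some agent $i$ with $|u_i^\star| = |(Lx)_i|$ bounded away from $0$ along a subsequence of its surfacing times; at such a time, the constraint polynomial in~\eqref{eq:t_opt_constrained_prop} has a strictly positive leading behavior (the $\gamma$ and $\beta$ terms are governed by $u_i(t_i^\ell)^2 > 0$ once $|u_i|$ is bounded below — note $|u_i| \ge \min(|u_i^\star|, u_i^{\max})$ and $u_i^{\max} = \min_k M_i^k$ is itself bounded below by earlier promises $f(|u_i^\star|)$), which forces $T_i^\star - t_i^\ell$ to be bounded below by a strictly positive constant and, crucially, $\dot V_i < 0$ on $[t_i^\ell, \min(T_i^\star, t_i^\ell+\tdwell_i, t_i^\ell + \tmax))$ with magnitude bounded away from $0$ on a time interval of length bounded away from $0$. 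This yields $|\Delta V_i^\ell|$ bounded away from $0$ along that subsequence, contradicting $\Delta V_i^\ell \to 0$. Hence $V^\infty = 0$, i.e. $|x_i(t) - x_j(t)| \to 0$ for all $i,j$ (connectedness of $\mathcal G$ makes $V(x)=0$ equivalent to consensus).

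The main obstacle I anticipate is making the penultimate step fully rigorous: one must handle the promise/truthfulness bookkeeping carefully to get a \emph{uniform} lower bound on $|u_i(t_i^\ell)|$ (the actual, possibly saturated, control) from a lower bound on $|u_i^\star(t_i^\ell)|$, and then convert that into a uniform lower bound on $|\Delta V_i^\ell|$ over the relevant subinterval. The subtlety is that saturation~\eqref{eq:realcontrol} and the $\texp_i$ truncation could in principle erode the contribution; one needs that $u_i^{\max}$ does not collapse to $0$, which follows because each promise is $M_i^\ell = f(|u_i^\star(t_i^\ell)|) = c|u_i^\star(t_i^\ell)|$ and the sublevel-set compactness bounds how fast $|u_i^\star|$ can change between consecutive surfacings (spaced by at most $\tmax$), so a single large $|u_i^\star|$ at time $t_i^\ell$ cannot be immediately cancelled. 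A clean way to organize this is: fix $\eps > 0$, suppose $V(x(t)) \ge \eps$ for all $t$, extract via compactness and the $\tmax$-periodic re-evaluation a recurring agent $i$ and a uniform lower bound $\delta > 0$ on $|u_i^\star|$ at infinitely many of its surfacing times; then exhibit a uniform $\eta(\eps,\delta) > 0$ with $|\Delta V_i^\ell| \ge \eta$ at those times; sum to get $V(x(0)) = \infty$, a contradiction. The remaining details (continuity of $\dot V_i$, the elementary root-finding of Propositions~\ref{prop:tstar}--\ref{prop:ttotal}, and the equivalence $V = 0 \iff$ consensus on a connected graph) are routine.
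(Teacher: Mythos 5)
The first half of your argument coincides with the paper's: no Zeno by the dwell time, the per-agent decomposition of $V$, the design guarantee $\Delta V_i^\ell \le 0$ on every submerged interval, hence $V$ nonincreasing, convergent, and $\Delta V_i^\ell \to 0$. Where you diverge is the endgame. The paper invokes LaSalle's invariance principle and then only needs a \emph{qualitative} fact: at a non-consensus configuration, the next time an agent with $\sum_{j\in\NN_i}(x_j-x_i)\neq 0$ surfaces, its bound on $\dot V_i$ is strictly negative at $t_i^\ell$ and continuous in $t$, so $T^\star_i > t_i^\ell$ and a nonzero control is applied for a positive duration, while $\tmax$ guarantees that surfacing occurs in finite time. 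You instead attempt a direct summability contradiction (``if $V^\infty>0$ then $|\Delta V_i^\ell|\ge\eta>0$ infinitely often''), which requires \emph{uniform quantitative} lower bounds that the paper's route never needs.

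Those bounds are exactly where your proposal has a genuine gap, and you partly acknowledge it. Concretely: (a) $V(x(t))\ge\epsilon$ gives $\|Lx(t)\|$ bounded below at all times, but the distinguished agent changes with $t$ and $(Lx)_i$ can drift by an amount proportional to $\tmax$ (velocities are only bounded by a global constant) between the time where $|(Lx)_i|\ge\delta$ and agent $i$'s next surfacing, so your pigeonhole/compactness extraction does not yet yield a fixed agent with $|u_i^\star(t_i^\ell)|\ge\delta$ at infinitely many of \emph{its own} surfacing times; (b) the saturated control can be throttled by a \emph{stale small} promise: $u_i^{\max}(t_i^\ell)=\min_{k\in\mathcal{P}_i^\ell}M_i^k$ may include a promise $c|u_i^\star(t_i^k)|$ made when agent $i$ was near local consensus and still believed by a neighbor for up to $\tmax$ afterwards, precisely the window in which your subsequence surfacing may fall --- your remark that ``a single large $|u_i^\star|$ cannot be immediately cancelled'' addresses the opposite direction and does not rule this out; and (c) the uniform lower bound on $\min\{T^\star_i,\,t_i^\ell+\tdwell_i\}-t_i^\ell$ and on $|\dot V_i|$ there (needed to get $|\Delta V_i^\ell|\ge\eta(\epsilon,\delta)$) is asserted, not derived; it would require carrying through upper bounds on the neighbors' promises $M_j$ and on the coefficients $\alpha,\beta,\gamma$ from sublevel-set compactness. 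None of these steps is obviously false, and your route, if completed, would arguably be more self-contained than the paper's appeal to LaSalle for this hybrid, asynchronously triggered system; but as written the key inequality $|\Delta V_i^\ell|\ge\eta$ is announced rather than proved, so the proposal does not yet constitute a proof of the theorem.
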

\begin{proof}
First, because $t^{\ell+1}-t^\ell \geq \tdwell_i > 0$, Zeno behavior is impossible, and so $x(t)$ exists for all $t \geq 0$.

Now, consider the objective function $V = \frac{1}{2} x^T L x$. Then, recall we can decompose it as
\begin{align}
V(x(t)) %
  &= V(x(0)) + \sum_{i=1}^N \int_0^t \dot{V}_i(\tau)d\tau.
\end{align}

Letting $\ell^\textrm{max}_i(t) = \argmax_{\ell\in\mathbb{Z}_{\geq 0}} t^\ell_i \leq t$ be the index such that $\tlast_i(t) = t^{\ell^\textrm{max}_i(t)}_i$, we can further expand this as
\begin{align}
V(x(t)) &= V(x(0)) + \sum_{i=1}^N \sum_{\ell=0}^{\ell^\textrm{max}_i(t)} \int_{t^\ell_i}^{\min\{t^{\ell+1}_i, t\}} \dot{V}_i(\tau)d\tau.
\end{align}

Consider $\Delta V_i^\ell$ (as defined in \eqref{eq:vdot_integral}), which is
the net contribution of agent~$i$ over the time interval $[t_i^\ell, t_i^{\ell+1})$.
Note that we have explicitly designed algorithm 1 to ensure that each $\Delta V_i^\ell \leq 0$.
 $T^\textrm{total}_i$ is exactly the earliest time at which we can no longer guarantee $\Delta V_i^\ell \leq 0$ in the worst case, and we always have $t^\textrm{ideal} < T^\textrm{total}_i$. 
Furthermore, if the dwell time forces an agent to stay submerged past $t^\textrm{ideal}_i$, by setting the control to 0 we ensure that the total contribution on the submerged interval is equal to the contribution from $t^\ell_i$ to $t^\textrm{ideal}_i$.
Thus, we know that $\Delta V_i^\ell \leq 0$ for all $i \in \{1,\dots,N\}$ and for all $\ell \in \{1, \dots, \ell^\textrm{max}_i - 1\}$.

Thus, we have
\begin{align}
  V(x(t)) \leq V(x(0)) + \sum_{i=1}^N \sum_{\ell=0}^{\ell^\textrm{max}_i - 1} \Delta V^\ell_i.
\end{align}

It is clear that $V(x(t))$ is a nonincreasing function along the system trajectories and bounded below by 0. Furthermore, by the introduction of the dwell time it is clear that each sequence of times $\{ t_i^\ell \}_{\ell \in \integernonnegative}$ goes to infinity as $\ell \rightarrow \infty$. Thus, $\lim_{t\rightarrow\infty} V(x(t)) = C \geq 0$ exists.

Because $\Delta V_i^\ell \leq 0$ for all $\ell$ and $V(x)$ is bounded from below, it is guaranteed that $\Delta V_i^\ell \rightarrow 0$ as $t \rightarrow \infty$.
Thus, by LaSalle's Invariance Principle~\cite{khalil2002nonlinear}, the trajectories of the system converge to the largest invariant set contained in 
\begin{align}
\{x\in\mathbb{R}^N \ | \ \dot{V}_i(x) = 0 \ \forall i \in \{1, \dots, N\} \} .
\end{align}

From examination of the local objective contribution~\eqref{eq:split}, we see that $\dot{V}_i(x) = 0$ if and only if either $u_i(t) = 0$ or $\sum_{j \in \NN_i} x_j-x_i = 0$.
First, note that 
$
  \sum_{j \in \NN_i} x_j - x_i = 0 
$
for all $i$ if and only if the system is at consensus.
This condition is equivalent to $Lx = 0$, and as we assume $\mathcal{G}$ is connected, $\ker(L) = \{\mathbf{1}_N\}$.

Now, assume the system is not at consensus, so there is at least one agent $i$ with $\sum_{j \in \NN_i} x_j - x_i \neq 0$.
From the control law~\eqref{eq:control} it is clear that this implies the next time agent $i$ surfaces, $t^\ell_i$, we will have $u^\star_i(t^{\ell}_i) \neq 0$ as well. 
Thus, we simply have to prove that the next time $t^\ell_i$ that agent $i$ surfaces, it computes a $\tideal_i > t^\ell_i$ so the real control $u_i(t) \neq 0$ for a nonzero period of time. 
As $\tideal_i > T^\star_i$, the last time at which we can guarantee $\dot{V}_i(t) \leq 0$, it suffices to show $T^\star_i > t^\ell_i$.

$T^\star_i$ is computed as the earliest time after which our bound on $\dot{V}_i(t)$ is positive. 
From~\eqref{eq:Vij_bound_1} we can write this bound at time $t^\ell_i$ as
\begin{align}
\dot{V}_i(t^\ell_i) = -\left( \sum_{j \in \NN_i} x_j - x_i \right)^2 ,
\end{align}
which is strictly negative.
As the bound is a continuous function of time, this implies that the smallest $t$ such that we can no longer guarantee $\dot{V}_i(t) \leq 0$ is strictly greater than $t^\ell_i$. 
Thus, the next time agent $i$ surfaces, it will apply a nonzero control for a positive duration.

Finally, due to the existence of the maximum submerged time $\tmax$, we know that there exists a finite future time at which agent $i$ will surface, which completes the proof. 

\end{proof}
\section{Simulation}

In this section we simulate a system of 5 agents with initial condition $x = [9\ -2\ 0.5\ 8.5\ 4]^T$ for a total time of 10 seconds, and with all $\sigma_i = \sigma$ the same value.
In all simulations we set $\tdwell = 10^{-8}$ seconds, but the dwell time condition was never used. 
Similarly, we set $\tmax = 5$ seconds, but it never affected the simulation. 
The topology of the communication network is shown in Figure~\ref{fig:network}. 
We compare our algorithm presented here with the algorithm proposed in~\cite{bowman_cdc16}, as well as with a simple periodic triggering rule where each agent surfaces every $T$ seconds and uses the constant control law $u^\star_i(t^\ell_i)$ on each submerged interval. 
Note that for the undirected graph in Figure~\ref{fig:network} the system will converge as long as $T < T^* = 2/\lambda_\text{max}(L) = 0.4331$~\cite{xie_acc09}. 

\begin{figure}
  \centering
  {\includegraphics[width=.4\columnwidth]{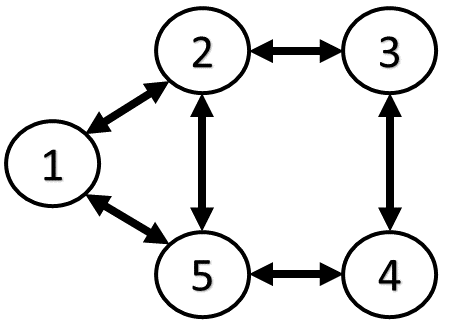}}
  \caption{Simulated communication network}\label{fig:network}
\end{figure}

We first compare our algorithm with $\sigma = 0.75$ and the promise function~\eqref{eq:promise} as $f(x) = x$, the algorithm presented in~\cite{bowman_cdc16} with $\sigma = 0.5$, and a periodic time-triggering rule with $T = 0.35$. 
The total number of surfacings by any agent up to time $t$, denoted $N_S(t)$, is shown in Figure~\ref{fig:surfacings}.

\newcommand{\figfactor}{0.35}

\begin{figure*}[htp]
  \centering
  \hspace*{\fill}%
  \subfigure[]{
%
%
\definecolor{mycolor1}{rgb}{0.00000,0.44700,0.74100}%
\definecolor{mycolor2}{rgb}{0.85000,0.32500,0.09800}%
\definecolor{mycolor3}{rgb}{0.92900,0.69400,0.12500}%
\begin{tikzpicture}

\begin{axis}[%
width=\figfactor\columnwidth,
scale only axis,
xmin=0,
xmax=6,
xlabel={$t$},
ymin=0,
ymax=90,
ylabel={$N_S(t)$},
axis background/.style={fill=white},
axis x line*=bottom,
axis y line*=left
]
\addplot[const plot,color=mycolor1,dashdotted,forget plot] plot table[row sep=crcr] {%
0	0\\
0.1	1\\
0.2	2\\
0.3	3\\
0.4	4\\
0.40511611582219	5\\
0.406829386058367	6\\
0.5	7\\
0.54820514503628	8\\
0.680035939441453	9\\
0.75373109628395	10\\
0.861946604491123	11\\
0.920408979513267	12\\
0.981538878852469	13\\
1.14510611577818	14\\
1.24088819315835	15\\
1.26652399459356	16\\
1.33388063438386	17\\
1.34088819315835	18\\
1.46983432488205	19\\
1.51009482720638	20\\
1.52757725857802	21\\
1.61009482720638	22\\
1.71009482720638	23\\
1.81009482720638	24\\
1.85037003760845	25\\
1.90243795478607	26\\
1.91009482720638	27\\
1.91737382166327	28\\
2.01009482720638	29\\
2.02152615784923	30\\
2.02365746328227	31\\
2.10713916558552	32\\
2.11009482720638	33\\
2.25125387415539	34\\
2.37114193383975	35\\
2.56192925822019	36\\
2.56423537379815	37\\
2.67200857319866	38\\
2.73474738346454	39\\
2.80039499720501	40\\
2.90335989951775	41\\
2.99944473441755	42\\
3.03738944274083	43\\
3.09944473441755	44\\
3.17253916439667	45\\
3.24116782786891	46\\
3.32992253563815	47\\
3.34116782786891	48\\
3.53039244085071	49\\
3.55084537956738	50\\
3.5745447315459	51\\
3.63039244085071	52\\
3.74404115635392	53\\
3.75033991775189	54\\
3.79407767697239	55\\
3.82629241867386	56\\
4.02515427877843	57\\
4.04689407620471	58\\
4.05197820480643	59\\
4.15197820480643	60\\
4.19313554189693	61\\
4.21359368626416	62\\
4.22556531595972	63\\
4.25197820480643	64\\
4.32556531595972	65\\
4.39141991016488	66\\
4.44460566925198	67\\
4.52241363347383	68\\
4.54178354117286	69\\
4.55174063890797	70\\
4.65174063890797	71\\
4.7114595583505	72\\
4.7160569200698	73\\
4.75273470855218	74\\
4.91452337993007	75\\
4.95758080135324	76\\
5.05487884462056	77\\
5.1146579953356	78\\
5.19219062318392	79\\
5.34695005341996	80\\
5.36419219937042	81\\
5.44695005341996	82\\
5.51814719929091	83\\
5.55289629182413	84\\
5.69738930538533	85\\
5.76194765905942	86\\
5.7990424469735	87\\
5.90404256418975	88\\
5.92015657055553	89\\
5.98420065875442	90\\
6.26588855016075	91\\
};
\addplot[const plot,color=mycolor2,dashed,forget plot] plot table[row sep=crcr] {%
0	0\\
1e-08	1\\
2e-08	2\\
3e-08	3\\
4e-08	4\\
5e-08	5\\
0.35000001	6\\
0.35000002	7\\
0.35000003	8\\
0.35000004	9\\
0.35000005	10\\
0.70000001	11\\
0.70000002	12\\
0.70000003	13\\
0.70000004	14\\
0.70000005	15\\
1.05000001	16\\
1.05000002	17\\
1.05000003	18\\
1.05000004	19\\
1.05000005	20\\
1.40000001	21\\
1.40000002	22\\
1.40000003	23\\
1.40000004	24\\
1.40000005	25\\
1.75000001	26\\
1.75000002	27\\
1.75000003	28\\
1.75000004	29\\
1.75000005	30\\
2.10000001	31\\
2.10000002	32\\
2.10000003	33\\
2.10000004	34\\
2.10000005	35\\
2.45000001	36\\
2.45000002	37\\
2.45000003	38\\
2.45000004	39\\
2.45000005	40\\
2.80000001	41\\
2.80000002	42\\
2.80000003	43\\
2.80000004	44\\
2.80000005	45\\
3.15000001	46\\
3.15000002	47\\
3.15000003	48\\
3.15000004	49\\
3.15000005	50\\
3.50000001	51\\
3.50000002	52\\
3.50000003	53\\
3.50000004	54\\
3.50000005	55\\
3.85000001	56\\
3.85000002	57\\
3.85000003	58\\
3.85000004	59\\
3.85000005	60\\
4.20000001	61\\
4.20000002	62\\
4.20000003	63\\
4.20000004	64\\
4.20000005	65\\
4.55000001	66\\
4.55000002	67\\
4.55000003	68\\
4.55000004	69\\
4.55000005	70\\
4.90000001	71\\
4.90000002	72\\
4.90000003	73\\
4.90000004	74\\
4.90000005	75\\
5.25000001	76\\
5.25000002	77\\
5.25000003	78\\
5.25000004	79\\
5.25000005	80\\
5.60000001	81\\
5.60000002	82\\
5.60000003	83\\
5.60000004	84\\
5.60000005	85\\
5.95000001	86\\
5.95000002	87\\
5.95000003	88\\
5.95000004	89\\
5.95000005	90\\
6.30000001	91\\
};
\addplot[const plot,color=mycolor3,solid,forget plot] plot table[row sep=crcr] {%
0	0\\
0.1	1\\
0.2	2\\
0.3	3\\
0.4	4\\
0.43193359375	5\\
0.5	6\\
0.5716796875	7\\
0.73984375	8\\
0.795703125	9\\
0.86025390625	10\\
1.104296875	11\\
1.2525390625	12\\
1.4833984375	13\\
1.5330078125	14\\
1.53671875	15\\
1.70888671875	16\\
1.98603515625	17\\
2.03642578125	18\\
2.17138671875	19\\
2.398046875	20\\
2.50751953125	21\\
2.77177734375	22\\
2.9091796875	23\\
2.9884765625	24\\
2.99150390625	25\\
3.2416015625	26\\
3.33798828125	27\\
3.34423828125	28\\
3.52353515625	29\\
3.55400390625	30\\
3.73408203125	31\\
3.75185546875	32\\
3.84208984375	33\\
3.9029296875	34\\
4.1248046875	35\\
4.2736328125	36\\
4.28662109375	37\\
4.32431640625	38\\
4.33046875	39\\
4.5140625	40\\
4.554296875	41\\
4.6212890625	42\\
4.67998046875	43\\
4.8205078125	44\\
5.01943359375	45\\
5.15146484375	46\\
5.15556640625	47\\
5.212890625	48\\
5.25302734375	49\\
5.39833984375	50\\
5.64921875	51\\
5.88037109375	52\\
6.02265625	53\\
};
\end{axis}
\end{tikzpicture}
  \hfill
  \subfigure[]{
%
%
\definecolor{mycolor1}{rgb}{0.00000,0.44700,0.74100}%
\definecolor{mycolor2}{rgb}{0.85000,0.32500,0.09800}%
\definecolor{mycolor3}{rgb}{0.92900,0.69400,0.12500}%
\begin{tikzpicture}

\begin{axis}[%
width=\figfactor\columnwidth,
scale only axis,
xmin=0,
xmax=6,
xlabel={$t$},
ymode=log,
ymin=1e-08,
ymax=10000,
yminorticks=true,
ylabel={$V(x(t))$},
axis background/.style={fill=white}
]
\addplot [color=mycolor1,dashdotted,forget plot]
  table[row sep=crcr]{%
0	135.125\\
0.1	60.67875\\
0.2	23.170625\\
0.3	15.938209375\\
0.4	17.1620875\\
0.40511611582219	16.888654269294\\
0.406829386058367	16.768086888037\\
0.5	12.6406575704345\\
0.54820514503628	10.7196408826976\\
0.680035939441453	8.02284785532103\\
0.75373109628395	7.13371544512131\\
0.861946604491123	6.14672183554307\\
0.920408979513267	5.63059524422274\\
0.981538878852469	5.12085808140932\\
1.14510611577818	3.90745248691982\\
1.24088819315835	3.28105846341424\\
1.26652399459356	3.12438387586312\\
1.33388063438386	2.73293987628213\\
1.34088819315835	2.69383433730487\\
1.46983432488205	2.02750716227466\\
1.51009482720638	1.84070296687084\\
1.52757725857802	1.76277784237409\\
1.61009482720638	1.43279342637453\\
1.71009482720638	1.08458291306849\\
1.81009482720638	0.793008523412318\\
1.85037003760845	0.691564687203133\\
1.90243795478607	0.576922724854207\\
1.91009482720638	0.563192996274462\\
1.91737382166327	0.550324434670837\\
2.01009482720638	0.375761634944367\\
2.02152615784923	0.357055645800482\\
2.02365746328227	0.353759218829058\\
2.10713916558552	0.233313977256913\\
2.11009482720638	0.229704700274617\\
2.25125387415539	0.09934297086101\\
2.37114193383975	0.0461740344307611\\
2.56192925822019	0.0653964720030734\\
2.56423537379815	0.0646446646266446\\
2.67200857319866	0.0346128497241524\\
2.73474738346454	0.0231693034905347\\
2.80039499720501	0.0129268978362696\\
2.90335989951775	0.00402163336693215\\
2.99944473441755	0.00266425813277939\\
3.03738944274083	0.00296046294873286\\
3.09944473441755	0.00474578399007401\\
3.17253916439667	0.00882534873944729\\
3.24116782786891	0.00640520189897202\\
3.32992253563815	0.00391638106288643\\
3.34116782786891	0.00365971010515167\\
3.53039244085071	0.0013196449042736\\
3.55084537956738	0.00122146586572736\\
3.5745447315459	0.00101111966857977\\
3.63039244085071	0.000802030969572227\\
3.74404115635392	0.000473949219079291\\
3.75033991775189	0.000460988541491923\\
3.79407767697239	0.000353670010754531\\
3.82629241867386	0.000300367037554961\\
4.02515427877843	8.06239528414396e-05\\
4.04689407620471	6.69401831317491e-05\\
4.05197820480643	6.47744200208255e-05\\
4.15197820480643	3.65681532001148e-05\\
4.19313554189693	2.99426896382274e-05\\
4.21359368626416	2.77306405208795e-05\\
4.22556531595972	2.67692280087302e-05\\
4.25197820480643	2.47994279959085e-05\\
4.32556531595972	1.97635855345721e-05\\
4.39141991016488	1.58635847568554e-05\\
4.44460566925198	1.3892389098563e-05\\
4.52241363347383	1.12614067478475e-05\\
4.54178354117286	1.0652519244354e-05\\
4.55174063890797	1.0167585514669e-05\\
4.65174063890797	6.10257918435327e-06\\
4.7114595583505	4.37303086515744e-06\\
4.7160569200698	4.27256365702518e-06\\
4.75273470855218	3.53895880251306e-06\\
4.91452337993007	2.22620834688513e-06\\
4.95758080135324	2.50995892012835e-06\\
5.05487884462056	1.8908253391675e-06\\
5.1146579953356	1.59360630434932e-06\\
5.19219062318392	1.2986010235179e-06\\
5.34695005341996	8.29461767884064e-07\\
5.36419219937042	8.1311745313002e-07\\
5.44695005341996	6.31887489879615e-07\\
5.51814719929091	5.07694875419209e-07\\
5.55289629182413	4.59397074490222e-07\\
5.69738930538533	3.06232513923744e-07\\
5.76194765905942	2.68086874945488e-07\\
5.7990424469735	2.54594547622168e-07\\
5.90404256418975	2.21228121622405e-07\\
5.92015657055553	2.16738605827256e-07\\
5.98420065875442	2.00555435632931e-07\\
6.26588855016075	1.51218441001824e-07\\
};
\addplot [color=mycolor2,dashed,forget plot]
  table[row sep=crcr]{%
0	135.125\\
1e-08	135.124990795\\
2e-08	135.124981590001\\
3e-08	135.124972385002\\
4e-08	135.124963180004\\
5e-08	135.124953975006\\
0.35000001	28.5959261408402\\
0.35000002	28.5959292726828\\
0.35000003	28.5959290265013\\
0.35000004	28.5959280517952\\
0.35000005	28.5959259898146\\
0.70000001	9.57244700253017\\
0.70000002	9.57244819352486\\
0.70000003	9.57244852873857\\
0.70000004	9.57244860068168\\
0.70000005	9.5724480952288\\
1.05000001	3.47403140842779\\
1.05000002	3.47403187667491\\
1.05000003	3.47403202601335\\
1.05000004	3.47403205112807\\
1.05000005	3.47403183150186\\
1.40000001	1.28496139058985\\
1.40000002	1.28496156809584\\
1.40000003	1.2849616385364\\
1.40000004	1.28496167287131\\
1.40000005	1.28496160308534\\
1.75000001	0.479339793574618\\
1.75000002	0.479339861565239\\
1.75000003	0.479339887602672\\
1.75000004	0.479339896688475\\
1.75000005	0.479339867224253\\
2.10000001	0.179773849282269\\
2.10000002	0.17977387517591\\
2.10000003	0.179773885982017\\
2.10000004	0.179773891455677\\
2.10000005	0.179773881469909\\
2.45000001	0.0676746887364886\\
2.45000002	0.0676746986022083\\
2.45000003	0.0676747025464441\\
2.45000004	0.0676747041204662\\
2.45000005	0.0676747000079185\\
2.80000001	0.0255429321727457\\
2.80000002	0.025542935926603\\
2.80000003	0.0255429375037578\\
2.80000004	0.0255429382908096\\
2.80000005	0.0255429368455284\\
3.15000001	0.00965888856416999\\
3.15000002	0.00965888999198\\
3.15000003	0.00965889057194491\\
3.15000004	0.00965889081655349\\
3.15000005	0.00965889023415087\\
3.50000001	0.00365728058790529\\
3.50000002	0.00365728113073086\\
3.50000003	0.00365728135851748\\
3.50000004	0.00365728147004624\\
3.50000005	0.00365728126041323\\
3.85000001	0.00138610239939832\\
3.85000002	0.00138610260572108\\
3.85000003	0.00138610269019773\\
3.85000004	0.0013861027269445\\
3.85000005	0.00138610264387333\\
4.20000001	0.000525676443232585\\
4.20000002	0.000525676521643786\\
4.20000003	0.000525676554462499\\
4.20000004	0.000525676570286224\\
4.20000005	0.000525676539885951\\
4.55000001	0.000199454205569664\\
4.55000002	0.000199454235360507\\
4.55000003	0.000199454247615811\\
4.55000004	0.00019945425304764\\
4.55000005	0.000199454241158081\\
4.90000001	7.57023920306792e-05\\
4.90000002	7.57024033453783e-05\\
4.90000003	7.57024080763821e-05\\
4.90000004	7.57024103283793e-05\\
4.90000005	7.57024059260605e-05\\
5.25000001	2.87392631238542e-05\\
5.25000002	2.87392674227782e-05\\
5.25000003	2.87392691996138e-05\\
5.25000004	2.87392699939628e-05\\
5.25000005	2.87392682921141e-05\\
5.60000001	1.09121831959116e-05\\
5.60000002	1.09121848293325e-05\\
5.60000003	1.09121855052519e-05\\
5.60000004	1.09121858340054e-05\\
5.60000005	1.09121851969473e-05\\
5.95000001	4.14378133331735e-06\\
5.95000002	4.14378195647165e-06\\
5.95000003	4.1437822085634e-06\\
5.95000004	4.14378232924898e-06\\
5.95000005	4.14378208361908e-06\\
6.30000001	1.57368038163725e-06\\
};
\addplot [color=mycolor3,solid,forget plot]
  table[row sep=crcr]{%
0	135.125\\
0.1	60.67875\\
0.2	23.170625\\
0.3	15.938209375\\
0.4	17.1620875\\
0.43193359375	15.6254274778775\\
0.5	11.9757097241789\\
0.5716796875	9.12833835828197\\
0.73984375	7.94571319931764\\
0.795703125	7.29033065673603\\
0.86025390625	6.42906996272308\\
1.104296875	4.10599474176634\\
1.2525390625	3.21587646751061\\
1.4833984375	1.95124938310368\\
1.5330078125	1.7594227070401\\
1.53671875	1.74541632093954\\
1.70888671875	1.10466355762484\\
1.98603515625	0.426557564774064\\
2.03642578125	0.346266994543607\\
2.17138671875	0.208219963526346\\
2.398046875	0.0779952819923297\\
2.50751953125	0.0616568462974229\\
2.77177734375	0.0211724123398144\\
2.9091796875	0.0106027031015892\\
2.9884765625	0.0082653413706619\\
2.99150390625	0.00814102181731084\\
3.2416015625	0.00299579138229799\\
3.33798828125	0.00181178372772009\\
3.34423828125	0.00175112764437313\\
3.52353515625	0.000689116565798661\\
3.55400390625	0.000595782855815324\\
3.73408203125	0.000499865472380671\\
3.75185546875	0.000499261947752058\\
3.84208984375	0.000392762139859344\\
3.9029296875	0.000290802668819166\\
4.1248046875	0.000103473126296758\\
4.2736328125	6.35526929927497e-05\\
4.28662109375	5.9365573436115e-05\\
4.32431640625	5.07008449515418e-05\\
4.33046875	4.94448030356337e-05\\
4.5140625	1.80929556622585e-05\\
4.554296875	1.43114392736569e-05\\
4.6212890625	1.03904084680509e-05\\
4.67998046875	9.36923983506019e-06\\
4.8205078125	5.63711171036336e-06\\
5.01943359375	3.07283838033088e-06\\
5.15146484375	3.13023706169629e-06\\
5.15556640625	3.15450132137591e-06\\
5.212890625	3.41168200757589e-06\\
5.25302734375	3.0950814833273e-06\\
5.39833984375	2.14289935175768e-06\\
5.64921875	1.29970312120356e-06\\
5.88037109375	1.09493165754765e-06\\
6.02265625	7.90769515650281e-07\\
};
\end{axis}
\end{tikzpicture}
  \hspace*{\fill}%
  \caption{Plots of (a) the cumulative number of surfacings up to time $t$, and (b) evolution of the objective function $V(x(t))$ for our algorithm with $\sigma = 0.5$ and $f(x)=x$ (solid yellow), the algorithm presented in~\cite{bowman_cdc16} with $\sigma = 0.5$ (dot-dash blue), and for a periodic triggering rule with period $T = 0.35$ (dashed red).}
\end{figure*}
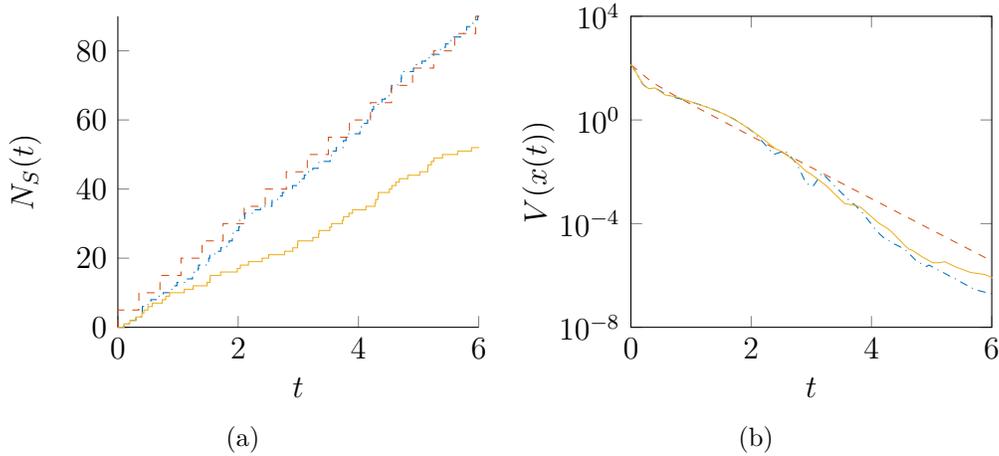

\begin{figure*}[htp]
  \centering
  \hspace*{\fill}%
  \subfigure[]{
%
%
\definecolor{mycolor1}{rgb}{0.00000,0.44700,0.74100}%
\definecolor{mycolor2}{rgb}{0.85000,0.32500,0.09800}%
\definecolor{mycolor3}{rgb}{0.92900,0.69400,0.12500}%
\definecolor{mycolor4}{rgb}{0.49400,0.18400,0.55600}%
\definecolor{mycolor5}{rgb}{0.46600,0.67400,0.18800}%
\begin{tikzpicture}

\begin{axis}[%
width=\figfactor\columnwidth,
scale only axis,
xmin=0,
xmax=6,
xlabel={$t$},
ymin=-2,
ymax=10,
ylabel={$x(t)$},
axis background/.style={fill=white},
axis x line*=bottom,
axis y line*=left
]
\addplot [color=mycolor1,solid,forget plot]
  table[row sep=crcr]{%
0	9\\
0.1	7.85\\
0.2	7.025\\
0.3	6.2\\
0.4	5.375\\
0.43193359375	5.1115478515625\\
0.5	4.55\\
0.5716796875	3.958642578125\\
0.73984375	3.83350165942925\\
0.795703125	3.79193336355586\\
0.86025390625	3.74389727339449\\
1.104296875	3.67128926800178\\
1.2525390625	3.62718404503814\\
1.4833984375	3.55849844089714\\
1.5330078125	3.54373859026109\\
1.53671875	3.5426345069458\\
1.70888671875	3.49141085208091\\
1.98603515625	3.40895326132281\\
2.03642578125	3.39396097209406\\
2.17138671875	3.35380720520622\\
2.398046875	3.28637095850095\\
2.50751953125	3.25380050069974\\
2.77177734375	3.17517814672108\\
2.9091796875	3.13429800923107\\
2.9884765625	3.11070549207265\\
2.99150390625	3.10980479252597\\
3.2416015625	3.03539538803988\\
3.33798828125	3.03977620806952\\
3.34423828125	3.04006027340072\\
3.52353515625	3.04820939758958\\
3.55400390625	3.04959421607919\\
3.73408203125	3.05777884843445\\
3.75185546875	3.05858665922005\\
3.84208984375	3.05654985720675\\
3.9029296875	3.0551765588796\\
4.1248046875	3.05016831843131\\
4.2736328125	3.04680891770807\\
4.28662109375	3.0465157416607\\
4.32431640625	3.04566486982398\\
4.33046875	3.04552599695944\\
4.5140625	3.04580620491514\\
4.554296875	3.04576026776288\\
4.6212890625	3.04568378017441\\
4.67998046875	3.04561676991105\\
4.8205078125	3.04545632437198\\
5.01943359375	3.04522920306337\\
5.15146484375	3.04507845784529\\
5.15556640625	3.04507377493171\\
5.212890625	3.04500832563954\\
5.25302734375	3.04496249998523\\
5.39833984375	3.04479659104698\\
5.64921875	3.04468526941826\\
5.88037109375	3.04458270098884\\
6.02265625	3.04451956528893\\
};
\addplot [color=mycolor2,solid,forget plot]
  table[row sep=crcr]{%
0	-2\\
0.1	0.4\\
0.2	2.8\\
0.3	3.2925\\
0.4	3.785\\
0.43193359375	3.94227294921875\\
0.5	3.74426913609505\\
0.5716796875	3.53575436015129\\
0.73984375	3.04656846890449\\
0.795703125	2.884074665308\\
0.86025390625	2.9389322073343\\
1.104296875	3.14632857272572\\
1.2525390625	3.06796593386443\\
1.4833984375	2.94593083619111\\
1.5330078125	2.9197067119872\\
1.53671875	2.92068458160779\\
1.70888671875	2.96605258532087\\
1.98603515625	3.03908400593218\\
2.03642578125	3.05236244604333\\
2.17138671875	3.05460722750902\\
2.398046875	3.058377225904\\
2.50751953125	3.06019806528392\\
2.77177734375	3.06459341886291\\
2.9091796875	3.06687880780808\\
2.9884765625	3.06819773874161\\
2.99150390625	3.06824809201617\\
3.2416015625	3.06408826181817\\
3.33798828125	3.06248507852827\\
3.34423828125	3.0623811233808\\
3.52353515625	3.05939891008774\\
3.55400390625	3.05889212874382\\
3.73408203125	3.05060571915677\\
3.75185546875	3.04978786311293\\
3.84208984375	3.04563567089036\\
3.9029296875	3.04656048990737\\
4.1248046875	3.0499331846211\\
4.2736328125	3.04824677776683\\
4.28662109375	3.04809960446525\\
4.32431640625	3.04767246992079\\
4.33046875	3.04760275625162\\
4.5140625	3.04552241183822\\
4.554296875	3.04506650657315\\
4.6212890625	3.04430740217549\\
4.67998046875	3.04364235590291\\
4.8205078125	3.0439330823554\\
5.01943359375	3.04434462494311\\
5.15146484375	3.04461777446622\\
5.15556640625	3.0446262598804\\
5.212890625	3.04456231217615\\
5.25302734375	3.04451753788918\\
5.39833984375	3.04435543536119\\
5.64921875	3.04407556883268\\
5.88037109375	3.04416965831959\\
6.02265625	3.04422757483096\\
};
\addplot [color=mycolor3,solid,forget plot]
  table[row sep=crcr]{%
0	0.5\\
0.1	0.6\\
0.2	0.7\\
0.3	0.8\\
0.4	0.9\\
0.43193359375	0.93193359375\\
0.5	1\\
0.5716796875	1.0716796875\\
0.73984375	1.23984375\\
0.795703125	1.295703125\\
0.86025390625	1.36025390625\\
1.104296875	1.604296875\\
1.2525390625	1.7525390625\\
1.4833984375	1.9833984375\\
1.5330078125	2.0330078125\\
1.53671875	2.03671875\\
1.70888671875	2.20888671875\\
1.98603515625	2.48603515625\\
2.03642578125	2.5405063403502\\
2.17138671875	2.68639621714568\\
2.398046875	2.93141098128629\\
2.50751953125	3.04974857310086\\
2.77177734375	3.01290001810151\\
2.9091796875	2.99374040358411\\
2.9884765625	2.98268311361387\\
2.99150390625	2.98297058947504\\
3.2416015625	3.00671980497366\\
3.33798828125	3.01587266545673\\
3.34423828125	3.01646616400882\\
3.52353515625	3.03349215372201\\
3.55400390625	3.03638545916347\\
3.73408203125	3.05348563619566\\
3.75185546875	3.05302472496063\\
3.84208984375	3.05068471407509\\
3.9029296875	3.04910697946286\\
4.1248046875	3.04335318646291\\
4.2736328125	3.03949368798935\\
4.28662109375	3.03960258261563\\
4.32431640625	3.03991862265883\\
4.33046875	3.03997020432392\\
4.5140625	3.04150946671048\\
4.554296875	3.04184679442498\\
4.6212890625	3.04240846144476\\
4.67998046875	3.04245546078301\\
4.8205078125	3.04256799330838\\
5.01943359375	3.04272729056631\\
5.15146484375	3.0428330195269\\
5.15556640625	3.04283630400645\\
5.212890625	3.04288220851819\\
5.25302734375	3.04291434949659\\
5.39833984375	3.04303071391476\\
5.64921875	3.04323161458026\\
5.88037109375	3.04341671846319\\
6.02265625	3.04353065862265\\
};
\addplot [color=mycolor4,solid,forget plot]
  table[row sep=crcr]{%
0	8.5\\
0.1	7.05\\
0.2	5.6\\
0.3	4.15\\
0.4	2.7\\
0.43193359375	2.874357421875\\
0.5	3.246\\
0.5716796875	3.63737109375\\
0.73984375	4.555546875\\
0.795703125	4.41442295419013\\
0.86025390625	4.25134094080669\\
1.104296875	3.63478730775796\\
1.2525390625	3.26026613330099\\
1.4833984375	3.41305175722727\\
1.5330078125	3.44588386253799\\
1.53671875	3.44833980742344\\
1.70888671875	3.37630144369369\\
1.98603515625	3.26033724842142\\
2.03642578125	3.239252849281\\
2.17138671875	3.1827826174747\\
2.398046875	3.08794368258149\\
2.50751953125	3.08329892779077\\
2.77177734375	3.07208687920587\\
2.9091796875	3.06625711115009\\
2.9884765625	3.06289266789401\\
2.99150390625	3.06276422240024\\
3.2416015625	3.05215296725393\\
3.33798828125	3.04806342846853\\
3.34423828125	3.0479488116582\\
3.52353515625	3.04466074191184\\
3.55400390625	3.04468107423644\\
3.73408203125	3.04480124348825\\
3.75185546875	3.04481310401093\\
3.84208984375	3.04487331897225\\
3.9029296875	3.04491391845374\\
4.1248046875	3.0449468113131\\
4.2736328125	3.04496887500925\\
4.28662109375	3.04497080051555\\
4.32431640625	3.04497638882704\\
4.33046875	3.04497547674511\\
4.5140625	3.04494825906219\\
4.554296875	3.04494229433594\\
4.6212890625	3.04493236277717\\
4.67998046875	3.04492366180513\\
4.8205078125	3.0449028286957\\
5.01943359375	3.04487333804671\\
5.15146484375	3.04485376447899\\
5.15556640625	3.04485315642438\\
5.212890625	3.04484465813721\\
5.25302734375	3.04483870788844\\
5.39833984375	3.044794020194\\
5.64921875	3.04471686785057\\
5.88037109375	3.04464578198179\\
6.02265625	3.04460202528098\\
};
\addplot [color=mycolor5,solid,forget plot]
  table[row sep=crcr]{%
0	4\\
0.1	4.1\\
0.2	4.2\\
0.3	4.3\\
0.4	4.4\\
0.43193359375	4.43193359375\\
0.5	4.5\\
0.5716796875	4.4283203125\\
0.73984375	4.26015625\\
0.795703125	4.204296875\\
0.86025390625	4.13974609375\\
1.104296875	3.895703125\\
1.2525390625	3.7474609375\\
1.4833984375	3.5166015625\\
1.5330078125	3.4669921875\\
1.53671875	3.46328125\\
1.70888671875	3.29111328125\\
1.98603515625	3.01396484375\\
2.03642578125	2.96357421875\\
2.17138671875	2.82861328125\\
2.398046875	2.87665441636635\\
2.50751953125	2.89985739459359\\
2.77177734375	2.95586743837853\\
2.9091796875	2.98499017733101\\
2.9884765625	2.99212825682111\\
2.99150390625	2.99240076970804\\
3.2416015625	3.01491385046438\\
3.33798828125	3.02359030915493\\
3.34423828125	3.02415291640538\\
3.52353515625	3.02704123537844\\
3.55400390625	3.02753206082484\\
3.73408203125	3.03043296506575\\
3.75185546875	3.03071927990948\\
3.84208984375	3.03217287834691\\
3.9029296875	3.03315295608123\\
4.1248046875	3.03672717215246\\
4.2736328125	3.03912466567911\\
4.28662109375	3.03933389575723\\
4.32431640625	3.03955650887872\\
4.33046875	3.03959284210839\\
4.5140625	3.04067707181927\\
4.554296875	3.04091467960698\\
4.6212890625	3.04131030810786\\
4.67998046875	3.04165691558459\\
4.8205078125	3.04248681268776\\
5.01943359375	3.04366158711386\\
5.15146484375	3.04444130975701\\
5.15556640625	3.04446553191012\\
5.212890625	3.04480406533581\\
5.25302734375	3.04472855757109\\
5.39833984375	3.04445518639374\\
5.64921875	3.04398321693498\\
5.88037109375	3.04354835834842\\
6.02265625	3.04368577503993\\
};
\end{axis}
\end{tikzpicture}
  \hfill
  \subfigure[]{
%
%
\definecolor{mycolor1}{rgb}{0.00000,0.44700,0.74100}%
\definecolor{mycolor2}{rgb}{0.85000,0.32500,0.09800}%
\definecolor{mycolor3}{rgb}{0.92900,0.69400,0.12500}%
\definecolor{mycolor4}{rgb}{0.49400,0.18400,0.55600}%
\definecolor{mycolor5}{rgb}{0.46600,0.67400,0.18800}%
\begin{tikzpicture}

\begin{axis}[%
width=\figfactor\columnwidth,
scale only axis,
xmin=0,
xmax=6,
xlabel={$t$},
ymin=1,
ymax=5,
ylabel={agent id},
axis background/.style={fill=white},
axis x line*=bottom,
axis y line*=left
]
\addplot [color=mycolor1,only marks,mark=x,mark options={solid},forget plot]
  table[row sep=crcr]{%
0.1	1\\
0.5716796875	1\\
0.86025390625	1\\
1.70888671875	1\\
3.2416015625	1\\
3.75185546875	1\\
4.33046875	1\\
4.5140625	1\\
5.01943359375	1\\
5.39833984375	1\\
};
\addplot [color=mycolor2,only marks,mark=x,mark options={solid},forget plot]
  table[row sep=crcr]{%
0.2	2\\
0.43193359375	2\\
0.795703125	2\\
1.104296875	2\\
1.5330078125	2\\
2.03642578125	2\\
2.77177734375	2\\
2.99150390625	2\\
3.55400390625	2\\
3.84208984375	2\\
4.1248046875	2\\
4.67998046875	2\\
5.15556640625	2\\
5.64921875	2\\
};
\addplot [color=mycolor3,only marks,mark=x,mark options={solid},forget plot]
  table[row sep=crcr]{%
0.3	3\\
1.98603515625	3\\
2.50751953125	3\\
2.9884765625	3\\
3.73408203125	3\\
4.2736328125	3\\
4.6212890625	3\\
4.8205078125	3\\
6.02265625	3\\
};
\addplot [color=mycolor4,only marks,mark=x,mark options={solid},forget plot]
  table[row sep=crcr]{%
0.4	4\\
0.73984375	4\\
1.2525390625	4\\
1.53671875	4\\
2.398046875	4\\
3.33798828125	4\\
3.52353515625	4\\
3.9029296875	4\\
4.32431640625	4\\
4.554296875	4\\
5.15146484375	4\\
5.25302734375	4\\
};
\addplot [color=mycolor5,only marks,mark=x,mark options={solid},forget plot]
  table[row sep=crcr]{%
0.5	5\\
1.4833984375	5\\
2.17138671875	5\\
2.9091796875	5\\
3.34423828125	5\\
4.28662109375	5\\
5.212890625	5\\
5.88037109375	5\\
};
\end{axis}
\end{tikzpicture}
  \hspace*{\fill}%
  \caption{Plots of (a) the evolution of the system states $x_i(t)$, $i=1,\dots,5$, and (b) each agent's surfacing times under our algorithm with $\sigma = 0.75$ and $f(x)=x$. }
\end{figure*}
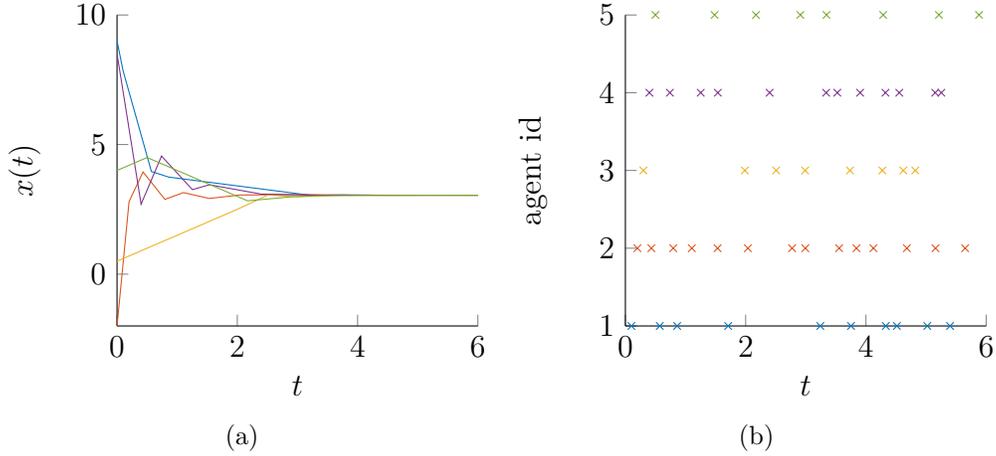

\begin{figure*}[htp]
  \hspace*{\fill}%
  \subfigure[]{
%
%
\definecolor{mycolor1}{rgb}{0.00000,0.44700,0.74100}%
\definecolor{mycolor2}{rgb}{0.85000,0.32500,0.09800}%
\definecolor{mycolor3}{rgb}{0.92900,0.69400,0.12500}%
\begin{tikzpicture}

\begin{axis}[%
width=\figfactor\columnwidth,
scale only axis,
xmin=0,
xmax=6,
xlabel={$t$},
ymin=0,
ymax=100,
ylabel={$N_S(t)$},
axis background/.style={fill=white},
axis x line*=bottom,
axis y line*=left
]
\addplot[const plot,color=mycolor1,dashdotted,forget plot] plot table[row sep=crcr] {%
0	0\\
0.1	1\\
0.2	2\\
0.3	3\\
0.397365557573722	4\\
0.398562877990676	5\\
0.4	6\\
0.5	7\\
0.51120110799191	8\\
0.628753097840896	9\\
0.714834999787085	10\\
0.753070335232496	11\\
0.814834999787085	12\\
0.890920809912167	13\\
0.914834999787085	14\\
1.06405375452911	15\\
1.1367292628778	16\\
1.14530126178643	17\\
1.2367292628778	18\\
1.30444843724274	19\\
1.36461662253582	20\\
1.42075772361799	21\\
1.49994635415543	22\\
1.61113185497434	23\\
1.81646965119184	24\\
1.81684099893146	25\\
1.83003287106383	26\\
1.8392370474141	27\\
1.9403099984792	28\\
1.94083010097689	29\\
2.0403099984792	30\\
2.09595359061775	31\\
2.1403099984792	32\\
2.23003755376857	33\\
2.27311996675251	34\\
2.33003755376857	35\\
2.38928581926495	36\\
2.44692391702562	37\\
2.54692391702562	38\\
2.55170585184456	39\\
2.65170585184456	40\\
2.7990491040591	41\\
2.86342831608764	42\\
2.9308948180542	43\\
2.96677308582246	44\\
3.0308948180542	45\\
3.04069132341142	46\\
3.07763000432352	47\\
3.08278722431564	48\\
3.1308948180542	49\\
3.24000105986011	50\\
3.28419727049238	51\\
3.31179474713674	52\\
3.41179474713674	53\\
3.42652925855598	54\\
3.4919689405041	55\\
3.52652925855598	56\\
3.57649772513024	57\\
3.61414850399778	58\\
3.71579601398467	59\\
3.83069692189202	60\\
3.84189352426997	61\\
3.84630679592151	62\\
3.87890690869046	63\\
3.94189352426997	64\\
3.98786197524713	65\\
4.10444910360161	66\\
4.18275772840566	67\\
4.20444910360161	68\\
4.25099974442038	69\\
4.35099974442038	70\\
4.42063540919012	71\\
4.46487674830694	72\\
4.56487674830694	73\\
4.57241709449123	74\\
4.65495264524978	75\\
4.67241709449123	76\\
4.75495264524978	77\\
4.79359146707993	78\\
4.80617838680441	79\\
4.8287716727378	80\\
4.85495264524978	81\\
4.85495264524978	82\\
4.97050977510402	83\\
5.00620657725344	84\\
5.01396582143486	85\\
5.13415755110046	86\\
5.19316677547189	87\\
5.2192519172445	88\\
5.2745088630973	89\\
5.30491216721419	90\\
5.37576525091615	91\\
5.50695602245402	92\\
5.60695602245402	93\\
5.67050257720903	94\\
5.80281574449857	95\\
5.85572118660139	96\\
5.87579282547049	97\\
6.02134361658221	98\\
};
\addplot[const plot,color=mycolor2,dashed,forget plot] plot table[row sep=crcr] {%
0	0\\
1e-08	1\\
2e-08	2\\
3e-08	3\\
4e-08	4\\
5e-08	5\\
0.35000001	6\\
0.35000002	7\\
0.35000003	8\\
0.35000004	9\\
0.35000005	10\\
0.70000001	11\\
0.70000002	12\\
0.70000003	13\\
0.70000004	14\\
0.70000005	15\\
1.05000001	16\\
1.05000002	17\\
1.05000003	18\\
1.05000004	19\\
1.05000005	20\\
1.40000001	21\\
1.40000002	22\\
1.40000003	23\\
1.40000004	24\\
1.40000005	25\\
1.75000001	26\\
1.75000002	27\\
1.75000003	28\\
1.75000004	29\\
1.75000005	30\\
2.10000001	31\\
2.10000002	32\\
2.10000003	33\\
2.10000004	34\\
2.10000005	35\\
2.45000001	36\\
2.45000002	37\\
2.45000003	38\\
2.45000004	39\\
2.45000005	40\\
2.80000001	41\\
2.80000002	42\\
2.80000003	43\\
2.80000004	44\\
2.80000005	45\\
3.15000001	46\\
3.15000002	47\\
3.15000003	48\\
3.15000004	49\\
3.15000005	50\\
3.50000001	51\\
3.50000002	52\\
3.50000003	53\\
3.50000004	54\\
3.50000005	55\\
3.85000001	56\\
3.85000002	57\\
3.85000003	58\\
3.85000004	59\\
3.85000005	60\\
4.20000001	61\\
4.20000002	62\\
4.20000003	63\\
4.20000004	64\\
4.20000005	65\\
4.55000001	66\\
4.55000002	67\\
4.55000003	68\\
4.55000004	69\\
4.55000005	70\\
4.90000001	71\\
4.90000002	72\\
4.90000003	73\\
4.90000004	74\\
4.90000005	75\\
5.25000001	76\\
5.25000002	77\\
5.25000003	78\\
5.25000004	79\\
5.25000005	80\\
5.60000001	81\\
5.60000002	82\\
5.60000003	83\\
5.60000004	84\\
5.60000005	85\\
5.95000001	86\\
5.95000002	87\\
5.95000003	88\\
5.95000004	89\\
5.95000005	90\\
6.30000001	91\\
};
\addplot[const plot,color=mycolor3,solid,forget plot] plot table[row sep=crcr] {%
0	0\\
0.1	1\\
0.2	2\\
0.3	3\\
0.4	4\\
0.43193359375	5\\
0.5	6\\
0.5716796875	7\\
0.632421875	8\\
0.69755859375	9\\
0.703515625	10\\
0.7876953125	11\\
0.86162109375	12\\
0.9716796875	13\\
1.02216796875	14\\
1.0630859375	15\\
1.06962890625	16\\
1.13271484375	17\\
1.16962890625	18\\
1.22314453125	19\\
1.30380859375	20\\
1.5216796875	21\\
1.541015625	22\\
1.598046875	23\\
1.6837890625	24\\
1.7072265625	25\\
1.75546875	26\\
1.8173828125	27\\
1.90634765625	28\\
1.92646484375	29\\
1.92861328125	30\\
1.94697265625	31\\
2.00634765625	32\\
2.02646484375	33\\
2.08642578125	34\\
2.14013671875	35\\
2.2697265625	36\\
2.33251953125	37\\
2.45712890625	38\\
2.50732421875	39\\
2.568359375	40\\
2.60224609375	41\\
2.7568359375	42\\
2.7900390625	43\\
2.85927734375	44\\
2.96298828125	45\\
3.01025390625	46\\
3.10498046875	47\\
3.12783203125	48\\
3.14619140625	49\\
3.218359375	50\\
3.24560546875	51\\
3.271484375	52\\
3.318359375	53\\
3.36298828125	54\\
3.412109375	55\\
3.4291015625	56\\
3.4697265625	57\\
3.49853515625	58\\
3.69453125	59\\
3.77587890625	60\\
3.9609375	61\\
4.06435546875	62\\
4.08349609375	63\\
4.16435546875	64\\
4.173046875	65\\
4.2177734375	66\\
4.29541015625	67\\
4.39208984375	68\\
4.39873046875	69\\
4.44892578125	70\\
4.56298828125	71\\
4.5634765625	72\\
4.60302734375	73\\
4.66298828125	74\\
4.7150390625	75\\
4.72265625	76\\
4.7884765625	77\\
4.83037109375	78\\
4.9193359375	79\\
4.927734375	80\\
4.95751953125	81\\
5.048828125	82\\
5.18349609375	83\\
5.22822265625	84\\
5.30595703125	85\\
5.3373046875	86\\
5.39453125	87\\
5.4619140625	88\\
5.583984375	89\\
5.62490234375	90\\
5.63994140625	91\\
5.67109375	92\\
5.72490234375	93\\
5.7255859375	94\\
5.8583984375	95\\
5.92412109375	96\\
6.01416015625	97\\
};
\end{axis}
\end{tikzpicture}
  \hfill
  \subfigure[]{\input{fig/all_compare_V_2.tex}\label{fig:lyap2}}
  \hspace*{\fill}%
  \caption{Plots of (a) the cumulative number of surfacings up to time $t$, and (b) the objective function $V(x(t))$ for our algorithm with $\sigma = 0.75$ and $f(x)=4x$ (solid yellow), the algorithm presented in~\cite{bowman_cdc16} with $\sigma = 0.5$ (dot-dash blue), and for a periodic triggering rule with period $T = 0.35$ (dashed red).}
\end{figure*}

\begin{figure*}[htp]
  \centering
  \hspace*{\fill}%
  \subfigure[]{
%
%
\definecolor{mycolor2}{rgb}{0.00000,0.44700,0.74100}%
\definecolor{mycolor1}{rgb}{0.85000,0.32500,0.09800}%
\begin{tikzpicture}

\begin{axis}[%
width=\figfactor\columnwidth,
scale only axis,
xmin=0,
xmax=6,
xlabel={$t$},
ymin=0,
ymax=70,
ylabel={$N_s(t)$},
axis background/.style={fill=white},
axis x line*=bottom,
axis y line*=left
]
\addplot[const plot,color=mycolor1,dashed,forget plot] plot table[row sep=crcr] {%
0	0\\
1e-08	1\\
2e-08	2\\
3e-08	3\\
4e-08	4\\
5e-08	5\\
0.43000001	6\\
0.43000002	7\\
0.43000003	8\\
0.43000004	9\\
0.43000005	10\\
0.86000001	11\\
0.86000002	12\\
0.86000003	13\\
0.86000004	14\\
0.86000005	15\\
1.29000001	16\\
1.29000002	17\\
1.29000003	18\\
1.29000004	19\\
1.29000005	20\\
1.72000001	21\\
1.72000002	22\\
1.72000003	23\\
1.72000004	24\\
1.72000005	25\\
2.15000001	26\\
2.15000002	27\\
2.15000003	28\\
2.15000004	29\\
2.15000005	30\\
2.58000001	31\\
2.58000002	32\\
2.58000003	33\\
2.58000004	34\\
2.58000005	35\\
3.01000001	36\\
3.01000002	37\\
3.01000003	38\\
3.01000004	39\\
3.01000005	40\\
3.44000001	41\\
3.44000002	42\\
3.44000003	43\\
3.44000004	44\\
3.44000005	45\\
3.87000001	46\\
3.87000002	47\\
3.87000003	48\\
3.87000004	49\\
3.87000005	50\\
4.30000001	51\\
4.30000002	52\\
4.30000003	53\\
4.30000004	54\\
4.30000005	55\\
4.73000001	56\\
4.73000002	57\\
4.73000003	58\\
4.73000004	59\\
4.73000005	60\\
5.16000001	61\\
5.16000002	62\\
5.16000003	63\\
5.16000004	64\\
5.16000005	65\\
5.59000001	66\\
5.59000002	67\\
5.59000003	68\\
5.59000004	69\\
5.59000005	70\\
6.02000001	71\\
};
\addplot[const plot,color=mycolor2,solid,forget plot] plot table[row sep=crcr] {%
0	0\\
0.1	1\\
0.2	2\\
0.3	3\\
0.4	4\\
0.43193359375	5\\
0.5	6\\
0.5716796875	7\\
0.73984375	8\\
0.795703125	9\\
0.86025390625	10\\
1.104296875	11\\
1.2525390625	12\\
1.4833984375	13\\
1.5330078125	14\\
1.53671875	15\\
1.70888671875	16\\
1.98603515625	17\\
2.03642578125	18\\
2.17138671875	19\\
2.398046875	20\\
2.50751953125	21\\
2.77177734375	22\\
2.9091796875	23\\
2.9884765625	24\\
2.99150390625	25\\
3.2416015625	26\\
3.33798828125	27\\
3.34423828125	28\\
3.52353515625	29\\
3.55400390625	30\\
3.73408203125	31\\
3.75185546875	32\\
3.84208984375	33\\
3.9029296875	34\\
4.1248046875	35\\
4.2736328125	36\\
4.28662109375	37\\
4.32431640625	38\\
4.33046875	39\\
4.5140625	40\\
4.554296875	41\\
4.6212890625	42\\
4.67998046875	43\\
4.8205078125	44\\
5.01943359375	45\\
5.15146484375	46\\
5.15556640625	47\\
5.212890625	48\\
5.25302734375	49\\
5.39833984375	50\\
5.64921875	51\\
5.88037109375	52\\
6.02265625	53\\
};
\end{axis}
\end{tikzpicture}
  \hfill
  \subfigure[]{
%
%
\definecolor{mycolor2}{rgb}{0.00000,0.44700,0.74100}%
\definecolor{mycolor1}{rgb}{0.85000,0.32500,0.09800}%
\begin{tikzpicture}

\begin{axis}[%
width=\figfactor\columnwidth,
scale only axis,
xmin=0,
xmax=6,
xlabel={$t$},
ymode=log,
ymin=1e-08,
ymax=10000,
yminorticks=true,
ylabel={$V(x(t))$},
axis background/.style={fill=white}
]
\addplot [color=mycolor1,dashed,forget plot]
  table[row sep=crcr]{%
0	135.125\\
1e-08	135.124990795\\
2e-08	135.124981590001\\
3e-08	135.124972385002\\
4e-08	135.124963180004\\
5e-08	135.124953975006\\
0.43000001	64.803313381938\\
0.43000002	64.80331897028\\
0.43000003	64.8033171717828\\
0.43000004	64.8033142962652\\
0.43000005	64.8033089672977\\
0.86000001	57.05018698623\\
0.86000002	57.0501921709051\\
0.86000003	57.0501923594425\\
0.86000004	57.0501908923909\\
0.86000005	57.0501868214644\\
1.29000001	53.9030109269542\\
1.29000002	53.9030159473007\\
1.29000003	53.9030166814984\\
1.29000004	53.9030158752081\\
1.29000005	53.9030121200782\\
1.72000001	51.9519579246636\\
1.72000002	51.9519627819538\\
1.72000003	51.9519637850997\\
1.72000004	51.9519633745499\\
1.72000005	51.9519598307497\\
2.15000001	50.3584854323518\\
2.15000002	50.3584901482977\\
2.15000003	50.3584912753514\\
2.15000004	50.3584910600103\\
2.15000005	50.3584876553868\\
2.58000001	48.8969749919772\\
2.58000002	48.8969795725667\\
2.58000003	48.8969807533653\\
2.58000004	48.896980652157\\
2.58000005	48.8969773682083\\
3.01000001	47.5025716270446\\
3.01000002	47.5025760775954\\
3.01000003	47.5025772707206\\
3.01000004	47.5025772281935\\
3.01000005	47.5025740478976\\
3.44000001	46.1553941421919\\
3.44000002	46.1553984666668\\
3.44000003	46.1553996515546\\
3.44000004	46.1553996420947\\
3.44000005	46.1553965583009\\
3.87000001	44.8486997820695\\
3.87000002	44.8487039841661\\
3.87000003	44.8487051493123\\
3.87000004	44.8487051570346\\
3.87000005	44.8487021636745\\
4.30000001	43.5796976630415\\
4.30000002	43.5797017462528\\
4.30000003	43.5797028860658\\
4.30000004	43.5797029030468\\
4.30000005	43.5796999962316\\
4.73000001	42.3468172466613\\
4.73000002	42.3468212143622\\
4.73000003	42.3468223260775\\
4.73000004	42.3468223476834\\
4.73000005	42.3468195240619\\
5.16000001	41.1488816212095\\
5.16000002	41.1488854766706\\
5.16000003	41.1488865592218\\
5.16000004	41.1488865830474\\
5.16000005	41.1488838398491\\
5.59000001	39.9848544723324\\
5.59000002	39.98485821873\\
5.59000003	39.9848592719039\\
5.59000004	39.9848592965926\\
5.59000005	39.9848566312855\\
6.02000001	38.8537618533724\\
};
\addplot [color=mycolor2,solid,forget plot]
  table[row sep=crcr]{%
0	135.125\\
0.1	60.67875\\
0.2	23.170625\\
0.3	15.938209375\\
0.4	17.1620875\\
0.43193359375	15.6254274778775\\
0.5	11.9757097241789\\
0.5716796875	9.12833835828197\\
0.73984375	7.94571319931764\\
0.795703125	7.29033065673603\\
0.86025390625	6.42906996272308\\
1.104296875	4.10599474176634\\
1.2525390625	3.21587646751061\\
1.4833984375	1.95124938310368\\
1.5330078125	1.7594227070401\\
1.53671875	1.74541632093954\\
1.70888671875	1.10466355762484\\
1.98603515625	0.426557564774064\\
2.03642578125	0.346266994543607\\
2.17138671875	0.208219963526346\\
2.398046875	0.0779952819923297\\
2.50751953125	0.0616568462974229\\
2.77177734375	0.0211724123398144\\
2.9091796875	0.0106027031015892\\
2.9884765625	0.0082653413706619\\
2.99150390625	0.00814102181731084\\
3.2416015625	0.00299579138229799\\
3.33798828125	0.00181178372772009\\
3.34423828125	0.00175112764437313\\
3.52353515625	0.000689116565798661\\
3.55400390625	0.000595782855815324\\
3.73408203125	0.000499865472380671\\
3.75185546875	0.000499261947752058\\
3.84208984375	0.000392762139859344\\
3.9029296875	0.000290802668819166\\
4.1248046875	0.000103473126296758\\
4.2736328125	6.35526929927497e-05\\
4.28662109375	5.9365573436115e-05\\
4.32431640625	5.07008449515418e-05\\
4.33046875	4.94448030356337e-05\\
4.5140625	1.80929556622585e-05\\
4.554296875	1.43114392736569e-05\\
4.6212890625	1.03904084680509e-05\\
4.67998046875	9.36923983506019e-06\\
4.8205078125	5.63711171036336e-06\\
5.01943359375	3.07283838033088e-06\\
5.15146484375	3.13023706169629e-06\\
5.15556640625	3.15450132137591e-06\\
5.212890625	3.41168200757589e-06\\
5.25302734375	3.0950814833273e-06\\
5.39833984375	2.14289935175768e-06\\
5.64921875	1.29970312120356e-06\\
5.88037109375	1.09493165754765e-06\\
6.02265625	7.90769515650281e-07\\
};
\end{axis}
\end{tikzpicture}
  \hspace*{\fill}%
  \caption{Plots of (a) the cumulative number of surfacings up to time $t$, and (b) the objective function $V(x(t))$ for our algorithm with $\sigma = 0.75$ and $f(x) = x$, and a periodic triggering rule near the threshold of its convergence, $T = 0.43$. Our algorithm's surfacings are shown as solid lines in blue, the periodic one as dashed red. }
\end{figure*}
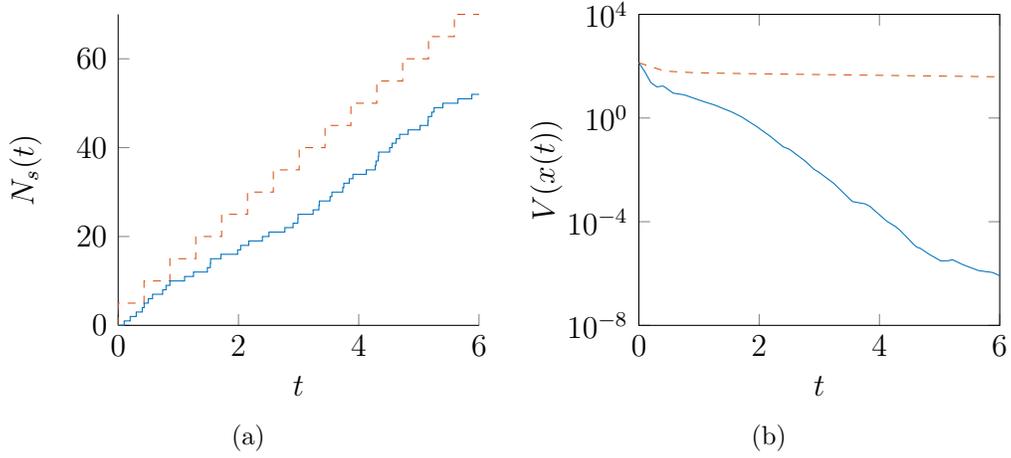

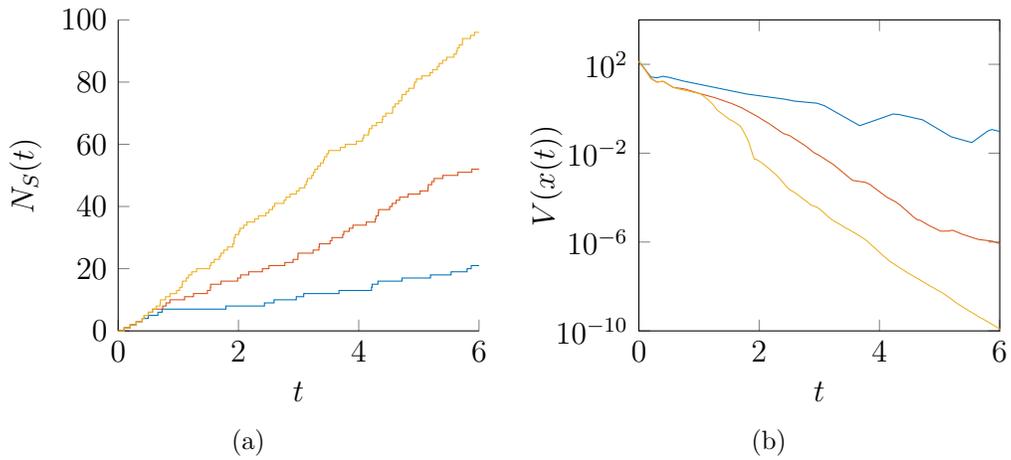
\begin{figure*}[htp]
  \centering
  \hspace*{\fill}%
  \subfigure[]{
%
%
\definecolor{mycolor1}{rgb}{0.00000,0.44700,0.74100}%
\definecolor{mycolor2}{rgb}{0.85000,0.32500,0.09800}%
\definecolor{mycolor3}{rgb}{0.92900,0.69400,0.12500}%
\begin{tikzpicture}

\begin{axis}[%
width=\figfactor\columnwidth,
scale only axis,
xmin=0,
xmax=6,
xlabel={$t$},
ymin=0,
ymax=100,
ylabel={$N_S(t)$},
axis background/.style={fill=white},
axis x line*=bottom,
axis y line*=left
]
\addplot[const plot,color=mycolor1,solid,forget plot] plot table[row sep=crcr] {%
0	0\\
0.1	1\\
0.2	2\\
0.3	3\\
0.4	4\\
0.5	5\\
0.6609375	6\\
0.73037109375	7\\
1.790625	8\\
2.43212890625	9\\
2.5916015625	10\\
2.961328125	11\\
3.08515625	12\\
3.6716796875	13\\
4.21259765625	14\\
4.2212890625	15\\
4.31884765625	16\\
4.719140625	17\\
5.19306640625	18\\
5.5359375	19\\
5.80126953125	20\\
5.87021484375	21\\
6.277734375	22\\
};
\addplot[const plot,color=mycolor2,solid,forget plot] plot table[row sep=crcr] {%
0	0\\
0.1	1\\
0.2	2\\
0.3	3\\
0.4	4\\
0.43193359375	5\\
0.5	6\\
0.5716796875	7\\
0.73984375	8\\
0.795703125	9\\
0.86025390625	10\\
1.104296875	11\\
1.2525390625	12\\
1.4833984375	13\\
1.5330078125	14\\
1.53671875	15\\
1.70888671875	16\\
1.98603515625	17\\
2.03642578125	18\\
2.17138671875	19\\
2.398046875	20\\
2.50751953125	21\\
2.77177734375	22\\
2.9091796875	23\\
2.9884765625	24\\
2.99150390625	25\\
3.2416015625	26\\
3.33798828125	27\\
3.34423828125	28\\
3.52353515625	29\\
3.55400390625	30\\
3.73408203125	31\\
3.75185546875	32\\
3.84208984375	33\\
3.9029296875	34\\
4.1248046875	35\\
4.2736328125	36\\
4.28662109375	37\\
4.32431640625	38\\
4.33046875	39\\
4.5140625	40\\
4.554296875	41\\
4.6212890625	42\\
4.67998046875	43\\
4.8205078125	44\\
5.01943359375	45\\
5.15146484375	46\\
5.15556640625	47\\
5.212890625	48\\
5.25302734375	49\\
5.39833984375	50\\
5.64921875	51\\
5.88037109375	52\\
6.02265625	53\\
};
\addplot[const plot,color=mycolor3,solid,forget plot] plot table[row sep=crcr] {%
0	0\\
0.1	1\\
0.2	2\\
0.3	3\\
0.4	4\\
0.43193359375	5\\
0.5	6\\
0.5716796875	7\\
0.632421875	8\\
0.69755859375	9\\
0.703515625	10\\
0.7876953125	11\\
0.86162109375	12\\
0.9716796875	13\\
1.02216796875	14\\
1.0630859375	15\\
1.06962890625	16\\
1.13271484375	17\\
1.16962890625	18\\
1.22314453125	19\\
1.30380859375	20\\
1.5216796875	21\\
1.541015625	22\\
1.598046875	23\\
1.6837890625	24\\
1.7072265625	25\\
1.75546875	26\\
1.8173828125	27\\
1.90634765625	28\\
1.92646484375	29\\
1.92861328125	30\\
1.94697265625	31\\
2.00634765625	32\\
2.02646484375	33\\
2.08642578125	34\\
2.14013671875	35\\
2.2697265625	36\\
2.33251953125	37\\
2.45712890625	38\\
2.50732421875	39\\
2.568359375	40\\
2.60224609375	41\\
2.7568359375	42\\
2.7900390625	43\\
2.85927734375	44\\
2.96298828125	45\\
3.01025390625	46\\
3.10498046875	47\\
3.12783203125	48\\
3.14619140625	49\\
3.218359375	50\\
3.24560546875	51\\
3.271484375	52\\
3.318359375	53\\
3.36298828125	54\\
3.412109375	55\\
3.4291015625	56\\
3.4697265625	57\\
3.49853515625	58\\
3.69453125	59\\
3.77587890625	60\\
3.9609375	61\\
4.06435546875	62\\
4.08349609375	63\\
4.16435546875	64\\
4.173046875	65\\
4.2177734375	66\\
4.29541015625	67\\
4.39208984375	68\\
4.39873046875	69\\
4.44892578125	70\\
4.56298828125	71\\
4.5634765625	72\\
4.60302734375	73\\
4.66298828125	74\\
4.7150390625	75\\
4.72265625	76\\
4.7884765625	77\\
4.83037109375	78\\
4.9193359375	79\\
4.927734375	80\\
4.95751953125	81\\
5.048828125	82\\
5.18349609375	83\\
5.22822265625	84\\
5.30595703125	85\\
5.3373046875	86\\
5.39453125	87\\
5.4619140625	88\\
5.583984375	89\\
5.62490234375	90\\
5.63994140625	91\\
5.67109375	92\\
5.72490234375	93\\
5.7255859375	94\\
5.8583984375	95\\
5.92412109375	96\\
6.01416015625	97\\
};
\end{axis}
\end{tikzpicture}
  \hfill
  \subfigure[]{
%
%
\definecolor{mycolor1}{rgb}{0.00000,0.44700,0.74100}%
\definecolor{mycolor2}{rgb}{0.85000,0.32500,0.09800}%
\definecolor{mycolor3}{rgb}{0.92900,0.69400,0.12500}%
\begin{tikzpicture}

\begin{axis}[%
width=\figfactor\columnwidth,
scale only axis,
xmin=0,
xmax=6,
xlabel={$t$},
ymode=log,
ymin=1e-10,
ymax=10000,
yminorticks=true,
ylabel={$V(x(t))$},
axis background/.style={fill=white}
]
\addplot [color=mycolor1,solid,forget plot]
  table[row sep=crcr]{%
0	135.125\\
0.1	60.67875\\
0.2	27.0494140625\\
0.3	24.6807010131836\\
0.4	28.9054388183594\\
0.5	25.8845569502869\\
0.6609375	20.0604149708152\\
0.73037109375	17.8895152597421\\
1.790625	4.61335512087811\\
2.43212890625	2.67224360333981\\
2.5916015625	2.20270636965425\\
2.961328125	1.77435459573398\\
3.08515625	1.37028465296194\\
3.6716796875	0.172547247724319\\
4.21259765625	0.560895704729973\\
4.2212890625	0.578205092877316\\
4.31884765625	0.553968190323539\\
4.719140625	0.316210683125134\\
5.19306640625	0.0545252867050041\\
5.5359375	0.0300370422930417\\
5.80126953125	0.0992100783884016\\
5.87021484375	0.116610720208568\\
6.277734375	0.060206188862978\\
};
\addplot [color=mycolor2,solid,forget plot]
  table[row sep=crcr]{%
0	135.125\\
0.1	60.67875\\
0.2	23.170625\\
0.3	15.938209375\\
0.4	17.1620875\\
0.43193359375	15.6254274778775\\
0.5	11.9757097241789\\
0.5716796875	9.12833835828197\\
0.73984375	7.94571319931764\\
0.795703125	7.29033065673603\\
0.86025390625	6.42906996272308\\
1.104296875	4.10599474176634\\
1.2525390625	3.21587646751061\\
1.4833984375	1.95124938310368\\
1.5330078125	1.7594227070401\\
1.53671875	1.74541632093954\\
1.70888671875	1.10466355762484\\
1.98603515625	0.426557564774064\\
2.03642578125	0.346266994543607\\
2.17138671875	0.208219963526346\\
2.398046875	0.0779952819923297\\
2.50751953125	0.0616568462974229\\
2.77177734375	0.0211724123398144\\
2.9091796875	0.0106027031015892\\
2.9884765625	0.0082653413706619\\
2.99150390625	0.00814102181731084\\
3.2416015625	0.00299579138229799\\
3.33798828125	0.00181178372772009\\
3.34423828125	0.00175112764437313\\
3.52353515625	0.000689116565798661\\
3.55400390625	0.000595782855815324\\
3.73408203125	0.000499865472380671\\
3.75185546875	0.000499261947752058\\
3.84208984375	0.000392762139859344\\
3.9029296875	0.000290802668819166\\
4.1248046875	0.000103473126296758\\
4.2736328125	6.35526929927497e-05\\
4.28662109375	5.9365573436115e-05\\
4.32431640625	5.07008449515418e-05\\
4.33046875	4.94448030356337e-05\\
4.5140625	1.80929556622585e-05\\
4.554296875	1.43114392736569e-05\\
4.6212890625	1.03904084680509e-05\\
4.67998046875	9.36923983506019e-06\\
4.8205078125	5.63711171036336e-06\\
5.01943359375	3.07283838033088e-06\\
5.15146484375	3.13023706169629e-06\\
5.15556640625	3.15450132137591e-06\\
5.212890625	3.41168200757589e-06\\
5.25302734375	3.0950814833273e-06\\
5.39833984375	2.14289935175768e-06\\
5.64921875	1.29970312120356e-06\\
5.88037109375	1.09493165754765e-06\\
6.02265625	7.90769515650281e-07\\
};
\addplot [color=mycolor3,solid,forget plot]
  table[row sep=crcr]{%
0	135.125\\
0.1	60.67875\\
0.2	23.170625\\
0.3	15.938209375\\
0.4	17.1620875\\
0.43193359375	15.6254274778775\\
0.5	11.9757097241789\\
0.5716796875	9.12833835828197\\
0.632421875	8.13273730144449\\
0.69755859375	7.3774547775148\\
0.703515625	7.3217727789192\\
0.7876953125	6.56843641634006\\
0.86162109375	5.92864114431566\\
0.9716796875	5.04509430612827\\
1.02216796875	4.66400836115476\\
1.0630859375	3.91202990600568\\
1.06962890625	3.79846595656352\\
1.13271484375	2.79519900105623\\
1.16962890625	2.28722047993873\\
1.22314453125	1.65053379392998\\
1.30380859375	0.862596137998336\\
1.5216796875	0.30932589762499\\
1.541015625	0.301702799434413\\
1.598046875	0.239742713199075\\
1.6837890625	0.170373211209357\\
1.7072265625	0.134268005754771\\
1.75546875	0.0796058956556684\\
1.8173828125	0.0368330067497289\\
1.90634765625	0.00669287338020297\\
1.92646484375	0.00524223849852395\\
1.92861328125	0.00526107375212845\\
1.94697265625	0.00534825396283584\\
2.00634765625	0.004272757404087\\
2.02646484375	0.00393341523507171\\
2.08642578125	0.00297539327611099\\
2.14013671875	0.00228487752443826\\
2.2697265625	0.0011286349749981\\
2.33251953125	0.000768745346848614\\
2.45712890625	0.000306969215417549\\
2.50732421875	0.000227821807100987\\
2.568359375	0.000183423521338809\\
2.60224609375	0.000161288743144472\\
2.7568359375	7.7675148135347e-05\\
2.7900390625	6.73326140692397e-05\\
2.85927734375	4.66713289558347e-05\\
2.96298828125	3.70309666730406e-05\\
3.01025390625	2.97337446202479e-05\\
3.10498046875	1.77148008225769e-05\\
3.12783203125	1.5510706489235e-05\\
3.14619140625	1.38977586323667e-05\\
3.218359375	9.84570919183858e-06\\
3.24560546875	8.8133264688193e-06\\
3.271484375	8.01452672924202e-06\\
3.318359375	6.80791657845263e-06\\
3.36298828125	5.76702278094971e-06\\
3.412109375	4.717293056566e-06\\
3.4291015625	4.40164566754483e-06\\
3.4697265625	3.91992262961183e-06\\
3.49853515625	3.59179207501002e-06\\
3.69453125	1.72011134506834e-06\\
3.77587890625	1.218299545551e-06\\
3.9609375	4.35672899118606e-07\\
4.06435546875	2.41209511828351e-07\\
4.08349609375	2.17579827334422e-07\\
4.16435546875	1.41016439828934e-07\\
4.173046875	1.34094250942387e-07\\
4.2177734375	1.12587968183176e-07\\
4.29541015625	8.2310383579668e-08\\
4.39208984375	5.65561940523633e-08\\
4.39873046875	5.51893996220154e-08\\
4.44892578125	4.46099836199179e-08\\
4.56298828125	2.93416784815921e-08\\
4.5634765625	2.92897506367555e-08\\
4.60302734375	2.51946604584751e-08\\
4.66298828125	2.05607602809458e-08\\
4.7150390625	1.69351452197319e-08\\
4.72265625	1.64810369684585e-08\\
4.7884765625	1.29138627839485e-08\\
4.83037109375	1.09080886742923e-08\\
4.9193359375	8.09861397468327e-09\\
4.927734375	7.84769241813353e-09\\
4.95751953125	7.00463641180749e-09\\
5.048828125	5.10665503617282e-09\\
5.18349609375	2.88104059469193e-09\\
5.22822265625	2.30773831299972e-09\\
5.30595703125	1.61170977318282e-09\\
5.3373046875	1.44186595527892e-09\\
5.39453125	1.1593336690771e-09\\
5.4619140625	8.78223925907548e-10\\
5.583984375	5.63994148448542e-10\\
5.62490234375	4.78404316040112e-10\\
5.63994140625	4.50054304662205e-10\\
5.67109375	3.99927891642232e-10\\
5.72490234375	3.38402337544312e-10\\
5.7255859375	3.37634218275899e-10\\
5.8583984375	2.07961139106515e-10\\
5.92412109375	1.63019887628076e-10\\
6.01416015625	1.1092638655221e-10\\
};
\end{axis}
\end{tikzpicture}
  \hspace*{\fill}%
  \caption{Plots of (a) the cumulative number of surfacings up to time $t$, and (b) the objective function $V(x(t))$ for our algorithm with $\sigma = 0.5$ and three promise selection functions: $f(x) = 0.25x$, $f(x) = x$, and $f(x) = 4x$, shown in blue, red, and yellow, respectively. }
\end{figure*}

The evolution of the objective function $V(x(t))$ for the same three configurations described above is displayed in Figure~\ref{fig:lyap}.
Note that although all three algorithms have a similar convergence rate, the algorithm presented here requires significantly fewer communications amongst the agents to achieve that result. 
For our algorithm under these parameters, the evolution of the robot states over time is shown in Figure~\ref{fig:states}, and each individual agent's surfacing times are shown in Figure~\ref{fig:surface_times}.

We also ran the same simulation but with our algorithm having $\sigma = 0.75$ and the promise function as $f(x) = 4x$.
The resulting surfacing counts and objective function evolution can be seen in Figures~\ref{fig:surfacings2} and~\ref{fig:lyap2}. 
In this situation, note that although all algorithms resulted in a similar number of communications required, the algorithm presented here converged more quickly.

As mentioned above, the periodic triggering rule for this specific network topology is guaranteed to converge for any period $T \leq 0.4331$. 
We further compared our algorithm with $f(x) = x$ and $\sigma = 0.75$ against the periodic triggering rule with a period very near this threshold, $T = 0.43$. 
The resulting $N_S(t)$ and evolution of the global objective $V(t)$ is seen in Figures~\ref{fig:surfacings_thresh} and~\ref{fig:lyap_thresh} respectively.
Here our algorithm both converges significantly more quickly and requires far fewer surfacings by the agents than even the most infrequent possible communication under a periodic triggering rule. 
Furthermore, to determine the threshold under which a periodic triggering rule will converge, each agent required global information about the communication graph.
On the contrary, our algorithm is guaranteed to converge using only local information and no shared parameters.

We additionally investigated the effect of choosing various promise functions $f(x)$.
We ran three simulations with $\sigma = 0.5$ and $f(x) = 0.25x$, $f(x) = x$, and $f(x) = 4x$. 
The results can be seen in Figures~\ref{fig:compare_f_Ns} and~\ref{fig:compare_f_V}.
For a promise of the form $f(x) = cx$, we see that a smaller value of $c$ results in more infrequent communication while also slowing convergence; it forces agents to move more slowly, slowing their movement towards consensus, while also slowing the growth of the bound their neighbors can make on their state, reducing the rate at which those neighbors need to communicate.

A single agent's control law (agent 5) from a run of our algorithm with $\sigma = 0.5$ is shown in Figure~\ref{fig:promises}, along with its ``promise'' currently on the cloud server. 
There exists a lag between when the promised control max $M_5(t)$ increases and when the actual control increases likewise.
While $M_5(t)$ represents the ideal control that the agent would use, it is still bound to a previous promise until the newer one propagates to all neighbor agents.

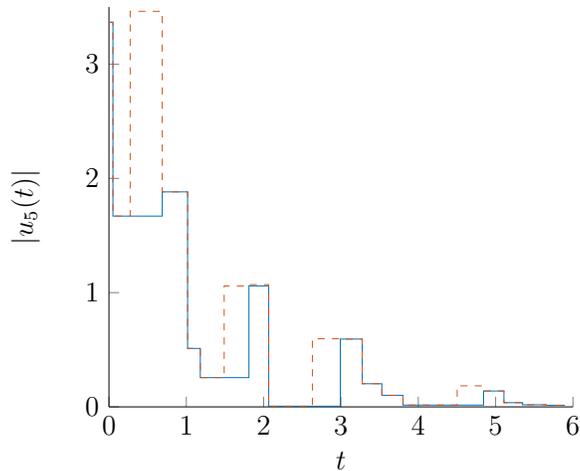
\begin{figure}[t]
  \centering
  \scalebox{.9}{
%
%
\definecolor{mycolor1}{rgb}{0.00000,0.44700,0.74100}%
\definecolor{mycolor2}{rgb}{0.85000,0.32500,0.09800}%
\begin{tikzpicture}

\begin{axis}[%
width=0.5\columnwidth,
scale only axis,
xmin=0,
xmax=6,
xlabel={$t$},
ymin=0,
ymax=3.5,
ylabel={$|u_5(t)|$},
axis background/.style={fill=white},
axis x line*=bottom,
axis y line*=left
]
\addplot[const plot,color=mycolor1,solid] plot table[row sep=crcr] {%
0	3.3666267970356\\
2e-09	3.3666267970356\\
2e-09	3.3666267970356\\
2e-09	3.3666267970356\\
2e-09	3.3666267970356\\
2e-09	3.36662660092804\\
0.037890627	3.36662660092804\\
0.0513671895	1.67012024777672\\
0.119921877	1.67012024777672\\
0.1302734395	1.67012024777672\\
0.170703127	1.67012024777672\\
0.202343752	1.67012024777672\\
0.2251953145	1.67012024777672\\
0.2439453145	1.67012024777672\\
0.250781252	1.67012024777672\\
0.274218752	1.67012024777672\\
0.286328127	1.67012024777672\\
0.298437502	1.67012024777672\\
0.362500002	1.67012024777672\\
0.3716796895	1.67012024777672\\
0.4599609395	1.67012024777672\\
0.482031252	1.67012024777672\\
0.666796877	1.67012024777672\\
0.688671877	1.88189085498726\\
0.8005859395	1.88189085498726\\
0.8634765645	1.88189085498726\\
1.016796877	0.511589360320372\\
1.0970703145	0.511589360320372\\
1.169921877	0.511589360320372\\
1.1810546895	0.255984963563767\\
1.188671877	0.255984963563767\\
1.276953127	0.255984963563767\\
1.316796877	0.255984963563767\\
1.406640627	0.255984963563767\\
1.439062502	0.255984963563767\\
1.441015627	0.255984963563767\\
1.4896484395	0.255984963563767\\
1.5544921895	0.255984963563767\\
1.590625002	0.255984963563767\\
1.6091796895	0.255984963563767\\
1.6310546895	0.255984963563767\\
1.670703127	0.255984963563767\\
1.674218752	0.255984963563767\\
1.8099609395	1.05882397314022\\
1.8416015645	1.05882397314022\\
1.8873046895	1.05882397314022\\
1.992968752	1.05882397314022\\
2.065625002	0.00608450190486542\\
2.139453127	0.00608450190486542\\
2.221093752	0.00608450190486542\\
2.277343752	0.00608450190486542\\
2.344921877	0.00608450190486542\\
2.386718752	0.00608450190486542\\
2.4193359395	0.00608450190486542\\
2.630468752	0.00608450190486542\\
2.6865234395	0.00608450190486542\\
2.7787109395	0.00608450190486542\\
2.8130859395	0.00608450190486542\\
2.827343752	0.00608450190486542\\
2.853125002	0.00608450190486542\\
2.868750002	0.00608450190486542\\
2.965625002	0.00608450190486542\\
2.9763671895	0.00608450190486542\\
2.9916015645	0.593613378018525\\
3.006640627	0.593613378018525\\
3.0134765645	0.593613378018525\\
3.1318359395	0.593613378018525\\
3.181640627	0.593613378018525\\
3.2744140645	0.202493411834104\\
3.2962890645	0.202493411834104\\
3.337109377	0.202493411834104\\
3.369140627	0.202493411834104\\
3.426171877	0.202493411834104\\
3.528906252	0.101265837828103\\
3.6064453145	0.101265837828103\\
3.628515627	0.101265837828103\\
3.658203127	0.101265837828103\\
3.668359377	0.101265837828103\\
3.802343752	0.0159150206489777\\
3.8974609395	0.0159150206489777\\
3.948437502	0.0159150206489777\\
4.076171877	0.0159150206489777\\
4.1841796895	0.0159150206489777\\
4.355859377	0.0159150206489777\\
4.3927734395	0.0159150206489777\\
4.4466796895	0.0159150206489777\\
4.5009765645	0.0159150206489777\\
4.502734377	0.0159150206489777\\
4.509375002	0.0159150206489777\\
4.537109377	0.0159150206489777\\
4.5416015645	0.0159150206489777\\
4.612109377	0.0159150206489777\\
4.634375002	0.0159150206489777\\
4.774609377	0.0159150206489777\\
4.819140627	0.0159150206489777\\
4.8392578145	0.0159150206489777\\
4.844531252	0.138885990904913\\
4.879296877	0.138885990904913\\
4.9740234395	0.138885990904913\\
5.024218752	0.138885990904913\\
5.107421877	0.03674945207232\\
5.170703127	0.03674945207232\\
5.205468752	0.03674945207232\\
5.288671877	0.03674945207232\\
5.295703127	0.03674945207232\\
5.3392578145	0.03674945207232\\
5.348046877	0.0183680759440234\\
5.4181640645	0.0183680759440234\\
5.5216796895	0.0183680759440234\\
5.569531252	0.0183680759440234\\
5.5919921895	0.0183680759440234\\
5.5998046895	0.0183680759440234\\
5.6525390645	0.0142509221326956\\
5.727343752	0.0142509221326956\\
5.7447265645	0.0142509221326956\\
5.790234377	0.0142509221326956\\
5.854687502	0.0142509221326956\\
5.886718752	0.00959451552871876\\
};

\addplot[const plot,color=mycolor2,dashed] plot table[row sep=crcr] {%
0	3.3666267970356\\
2e-09	3.3666267970356\\
2e-09	3.3666267970356\\
2e-09	3.3666267970356\\
2e-09	3.3666267970356\\
2e-09	3.36662660092804\\
0.037890627	3.36662660092804\\
0.0513671895	1.67012024777672\\
0.119921877	1.67012024777672\\
0.1302734395	1.67012024777672\\
0.170703127	1.67012024777672\\
0.202343752	1.67012024777672\\
0.2251953145	1.67012024777672\\
0.2439453145	1.67012024777672\\
0.250781252	1.67012024777672\\
0.274218752	3.46221704341609\\
0.286328127	3.46221704341609\\
0.298437502	3.46221704341609\\
0.362500002	3.46221704341609\\
0.3716796895	3.46221704341609\\
0.4599609395	3.46221704341609\\
0.482031252	3.46221704341609\\
0.666796877	3.46221704341609\\
0.688671877	1.88189085498726\\
0.8005859395	1.88189085498726\\
0.8634765645	1.88189085498726\\
1.016796877	0.511589360320372\\
1.0970703145	0.511589360320372\\
1.169921877	0.511589360320372\\
1.1810546895	0.255984963563767\\
1.188671877	0.255984963563767\\
1.276953127	0.255984963563767\\
1.316796877	0.255984963563767\\
1.406640627	0.255984963563767\\
1.439062502	0.255984963563767\\
1.441015627	0.255984963563767\\
1.4896484395	1.05882397314022\\
1.5544921895	1.05882397314022\\
1.590625002	1.05882397314022\\
1.6091796895	1.05882397314022\\
1.6310546895	1.05882397314022\\
1.670703127	1.05882397314022\\
1.674218752	1.05882397314022\\
1.8099609395	1.07012246196572\\
1.8416015645	1.07012246196572\\
1.8873046895	1.07012246196572\\
1.992968752	1.07012246196572\\
2.065625002	0.00608450190486542\\
2.139453127	0.00608450190486542\\
2.221093752	0.00608450190486542\\
2.277343752	0.00608450190486542\\
2.344921877	0.00608450190486542\\
2.386718752	0.00608450190486542\\
2.4193359395	0.00608450190486542\\
2.630468752	0.596258802844438\\
2.6865234395	0.596258802844438\\
2.7787109395	0.596258802844438\\
2.8130859395	0.596258802844438\\
2.827343752	0.596258802844438\\
2.853125002	0.596258802844438\\
2.868750002	0.596258802844438\\
2.965625002	0.596258802844438\\
2.9763671895	0.596258802844438\\
2.9916015645	0.593613378018525\\
3.006640627	0.593613378018525\\
3.0134765645	0.593613378018525\\
3.1318359395	0.593613378018525\\
3.181640627	0.593613378018525\\
3.2744140645	0.202493411834104\\
3.2962890645	0.202493411834104\\
3.337109377	0.202493411834104\\
3.369140627	0.202493411834104\\
3.426171877	0.202493411834104\\
3.528906252	0.101265837828103\\
3.6064453145	0.101265837828103\\
3.628515627	0.101265837828103\\
3.658203127	0.101265837828103\\
3.668359377	0.101265837828103\\
3.802343752	0.0159150206489777\\
3.8974609395	0.0159150206489777\\
3.948437502	0.0159150206489777\\
4.076171877	0.0159150206489777\\
4.1841796895	0.0159150206489777\\
4.355859377	0.0159150206489777\\
4.3927734395	0.0159150206489777\\
4.4466796895	0.0159150206489777\\
4.5009765645	0.0159150206489777\\
4.502734377	0.184126751512541\\
4.509375002	0.184126751512541\\
4.537109377	0.184126751512541\\
4.5416015645	0.184126751512541\\
4.612109377	0.184126751512541\\
4.634375002	0.184126751512541\\
4.774609377	0.184126751512541\\
4.819140627	0.184126751512541\\
4.8392578145	0.184126751512541\\
4.844531252	0.138885990904913\\
4.879296877	0.138885990904913\\
4.9740234395	0.138885990904913\\
5.024218752	0.138885990904913\\
5.107421877	0.03674945207232\\
5.170703127	0.03674945207232\\
5.205468752	0.03674945207232\\
5.288671877	0.03674945207232\\
5.295703127	0.03674945207232\\
5.3392578145	0.03674945207232\\
5.348046877	0.0183680759440234\\
5.4181640645	0.0183680759440234\\
5.5216796895	0.0183680759440234\\
5.569531252	0.0183680759440234\\
5.5919921895	0.0183680759440234\\
5.5998046895	0.0183680759440234\\
5.6525390645	0.0142509221326956\\
5.727343752	0.0142509221326956\\
5.7447265645	0.0142509221326956\\
5.790234377	0.0142509221326956\\
5.854687502	0.0142509221326956\\
5.886718752	0.00959451552871876\\
};

\end{axis}
\end{tikzpicture}
  \caption{Magnitude of control law in use by agent $i=5$, $|u_5(t)|$ (solid blue), as well as its current promise on the cloud server $M_5(t)$ (dashed red).}\label{fig:promises}
\end{figure}

\section{Conclusion}
We have presented a novel self-triggering algorithm that, given only the ability to communicate asynchronously at discrete intervals through a cloud server, provably drives a set of agents to consensus without Zeno behavior. 
Unlike most previous work, we do not require an agent to be able to listen continuously, instead only being able to receive information at its discrete surfacing times. 
Through the use of control promises, we are able to bound the states of neighboring agents, allowing an agent to remain submerged until its total contribution to the consensus would become detrimental.  
The given algorithm requires no global parameters, and is fully distributed, requiring no computation to be done off of each local platform.
Simulation results show the effectiveness of the proposed algorithm.

In the future, we are interested in investigating control laws different from~\eqref{eq:realcontrol} and forms of $f(x)$ other than $f(x)=cx$ that may be able to provide more infrequent surfacings or faster convergence. 
We are additionally interested in methods to reach approximate consensus rather than true asymptotic consensus, and guaranteeing no Zeno behavior without a dwell time. 

\section*{Acknowledgments}

This work was supported in part by the TerraSwarm Research Center, one of six centers supported by the STARnet phase of the Focus Center Research Program (FCRP) a Semiconductor Research Corporation program sponsored by MARCO and DARPA.

\bibliographystyle{ieeetr}
\bibliography{alias,sean,cameron,cameron-main,Main-add}

\appendix
\section{Proof of Proposition 1}
\label{sec:prop1_proof}

We begin by further splitting up the local objective contribution $\dot{V}_i$ as a sum of individual neighbor pair contributions:
$
\dot{V}_i(t) = \sum_{j\in\NN_i} \dot{V}_{ij}(t) ,
$
where
\begin{align}
\dot{V}_{ij}(t) &\triangleq -u_i(t) \left( x_j(t) - x_i(t) \right) .
\end{align}

For $t \leq \tnext_j$, we can write $\dot{V}_{ij}(t)$ exactly as
\begin{multline}
  \dot{V}_{ij}(t) = -u_i(t) \big[x_j(\tlast_i) - x_i(\tlast_i)  + (u_j(t) - u_i(t)) (t - \tlast_i) \big] .
\label{eq:Vijdot_1}
\end{multline}

For $t > \tnext_j$, since agent~$i$ no longer has access to $u_j(t)$, we write it as follows:
\begin{multline}
  \dot{V}_{ij}(t) = -u_i(t) \Big[ x_j(\tnext_j) + \int_{\tnext_j}^t u_j(\tau)d\tau 
  - (x_i(\tnext_j) + u_i(t)(t-\tnext_j))\Big] .
\end{multline}

We can then use the promise $M_j(t)$ to bound
\begin{align}
\left| \int_{\tnext_j}^t u_j(\tau) d\tau \right| \leq M_j(t) (t-\tnext_j)
\end{align}

allowing us to upper bound $\dot{V}_{ij}(t)$ for $t > \tnext_j$ with
\begin{align}
\dot{V}_{ij}(t) \leq &-u_i(t) (x_j(\tnext_j) - x_i(\tnext_j) ) \nonumber \\
  & + (|u_i(t)| M_j(t) + u_i(t)^2) (t-\tnext_j) .
\label{eq:Vijdot_2}
\end{align}

Letting $\alpha_{ij}$, $\beta_{ij}$, and $\gamma_{ij}$ be as defined in Proposition~\ref{prop:tstar},
 we can write these as 
\begin{align}
\dot{V}_{ij}(t) \leq \left\{ \begin{array}{cc}
                             \alpha_{ij}(\tlast_i) + \beta_{ij}(\tlast_i) (t - \tlast_i) & t \leq \tnext_j \\
                            \alpha_{ij}(\tnext_j) + \gamma_{ij}(\tlast_i) (t-\tnext_j) & \textrm{ otherwise.}
                         \end{array} \right. \label{eq:Vij_bound_1}
\end{align}

Assume that the solution to~\eqref{eq:t_opt} lies in the interval $T_i^* \in [\tnext_{\pi(k)}, \tnext_{\pi(k+1)})$ for some $k \in \{0, \dots, |\mathcal{N}_i| \}$.
On this interval, the states of neighbors $\pi(m)$ for $m>k$ are known exactly, while those $\pi(m')$ with $m'\leq k$ are only known to lie in a ball since they are scheduled to surface and change their control by this time interval.
Using~\eqref{eq:Vijdot_1} and~\eqref{eq:Vijdot_2}, we can write the local objective contribution $\dot{V}_i(t)$ for $t$ in this interval as
\begin{align}
\dot{V}_i(t)
  &= \sum_{m'=1}^k \dot{V}_{i{\pi(m')}}(t) + \sum_{m=k+1}^{|\NN_i|} \dot{V}_{i{\pi(m)}}(t) \\
  &\leq \sum_{m'=1}^k \left( \alpha_{i{\pi(m')}}(\tnext_{\pi(m')}) + \gamma_{i{\pi(m')}}(\tlast_i) (t-\tnext_{\pi(m')})\right) \nonumber \\
      & \ \ \ \ + \sum_{m=k+1}^{|\NN_i|} \left( \alpha_{i{\pi(m)}}(\tlast_i) + \beta_{i{\pi(m)}}(\tlast_i) (t-\tlast_i) \right)
\label{eq:v_ubound}
\end{align}

Let $\tau^{\star,(k)}_i$ be the solution to~\eqref{eq:t_opt} with the additional constraint that $t$ be within the interval $[\tnext_{\pi(k)}, \tnext_{\pi(k+1)}]$:
\begin{equation}
\begin{aligned}
& \underset{t}{\text{infimum}}
& & t \\
& \text{subject to}
& & \max_{x(t) \in R^i(t)} \dot{V}_i(x(t)) > 0 , \\
&&& \tnext_{\pi(k)} \leq t \leq \tnext_{\pi(k+1)} .
\label{eq:t_opt_constrained}
\end{aligned}
\end{equation}

The objective derivative constraint in~\eqref{eq:t_opt_constrained} can be rewritten using~\eqref{eq:v_ubound} in the form seen in Proposition~\ref{prop:tstar}.
The solution to the original optimization~\eqref{eq:t_opt} can then be written as 
\begin{align}
  T^\star_i = \min \ \{\tau^{\star,(k)}_i \ | \ k \in \{0, \ldots, |\NN_i|\} \}
 \label{eq:t_star}
\end{align}

\section{Proof of Proposition 2}
\label{sec:prop2_proof}

First, note that the separation amongst neighbor pairs is still valid: 
\begin{align}
\int_{\tlast_i}^t \dot{V}_i(\tau) d\tau = \sum_{j\in\NN_i} \int_{\tlast_i}^t \dot{V}_{ij}(\tau) d\tau .
\end{align}
For an agent $j \in \NN_i$ and $t \leq \tnext_j$, we can exactly compute the pair contribution over its submerged interval as 
\begin{align}
\int_{\tlast_i}^t \dot{V}_{ij}(\tau)d\tau = \alpha_{ij}(\tlast_i) (t-\tlast_i) + \frac{1}{2} \beta_{ij}(\tlast_i) (t-\tlast_i)^2
\label{eq:vint_expr_1}
\end{align}
where $\alpha_{ij}$ and $\beta_{ij}$ are as given in~\eqref{eq:alpha} and~\eqref{eq:beta}.

For agent $j$ and $t > \tnext_j$, we first split the integral into a part that we can compute exactly and a part that we can only bound:
\begin{align}
\label{eq:vint_split}
\int_{\tlast_i}^t \dot{V}_{ij}(\tau)d\tau = \int_{\tlast_i}^{\tnext_j} \dot{V}_{ij}(\tau)d\tau + \int_{\tnext_j}^t \dot{V}_{ij}(\tau)d\tau ,
\end{align}
and using the bound~\eqref{eq:v_ubound}, can write
\begin{align}
 \int_{\tnext_j}^t \dot{V}_{ij}(\tau)d\tau \leq \alpha_{ij}(\tnext_j)(t-\tnext_j) + \frac{1}{2} \gamma_{ij}(\tlast_i) (t-\tnext_j)^2 .
\end{align}

	The total pair contribution $\int \dot{V}_{ij}(t)$ is then given by~\eqref{eq:vint_expr_1}
for $t \leq \tnext_j$, and bounded by
\begin{multline}
\int_{\tlast_i}^t \dot{V}_{ij}(\tau)d\tau \leq
                \alpha_{ij}(\tlast_i) (\tnext_j-\tlast_i) 
  + \frac{1}{2} \beta_{ij}(\tlast_i) (\tnext_j-\tlast_i)^2 + \\ \alpha_{ij}(\tnext_j)(t-\tnext_j)
  + \frac{1}{2} \gamma_{ij}(\tlast_i) (t-\tnext_j)^2
\label{eq:v_int_ubound2}
\end{multline}
for $t > \tnext_j$.

As before, consider times $t$ that lie in the interval $[\tnext_{\pi(k)}, \tnext_{\pi(k+1)})$ for some $k \in \{0, \dots, |\NN_i|\}$.
We can bound the full objective contribution for times $t$ in this interval as (from~\eqref{eq:vint_expr_1},~\eqref{eq:vint_split}, and~\eqref{eq:v_int_ubound2})
\begin{multline}
\label{eq:vint_bound_final} 
\int_{\tlast_i}^t \dot{V}_i(\tau) d\tau\leq  \sum_{m' = 1}^k \Big[ \alpha_{i\pi(m')} (\tlast_i) (\tnext_{\pi(m')}-\tlast_i) 
+ \frac{1}{2} \beta_{i{\pi(m')}}(\tlast_i) (\tnext_{\pi(m')}-\tlast_i)^2 \\
                     \ + \alpha_{i{\pi(m')}}(\tnext_{\pi(m')})(t-\tnext_{\pi(m')}) 
                     + \frac{1}{2} \gamma_{i{\pi(m')}}(\tlast_i) (t-\tnext_{\pi(m')})^2 \Big] \\
    + \sum_{m=k+1}^{|\NN_i|}  \Big[ \alpha_{i{\pi(m)}} (\tlast_i) (t-\tlast_i)
    + \frac{1}{2} \beta_{i{\pi(m)}}(\tlast_i) (t-\tlast_i)^2 \Big] .
\end{multline}

Let $\tau^{tot,(k)}_i$ be the optimal solution to~\eqref{eq:t_int_opt} with the additional constraint that $t$ be within the interval $[\tnext_{\pi(k)}, \tnext_{\pi(k+1)})$.
Using~\eqref{eq:vint_bound_final}, we can rewrite the objective bound in~\eqref{eq:t_int_opt} on this interval, resulting in the optimization seen in~\eqref{eq:vint_opt_constrained_prop}.
The solution $T^{\textrm{total}}_i$ to~\eqref{eq:t_int_opt} can then be written as 
\begin{align}
T^{\textrm{total}}_i = \min \ \{\tau^{tot,(k)}_i \ | \ k \in \{0, \dots, |\NN_i\} \}
\end{align}

\end{document}